
\documentclass[12pt,reqno]{amsart}
\usepackage{fullpage}
\usepackage{amsmath}
\usepackage{mathrsfs,amssymb,graphicx,verbatim,hyperref}
\usepackage{paralist}
\renewcommand{\setminus}{\smallsetminus}
\usepackage{ifthen}
\newcommand{\U}{\mathscr{U}}

\addtolength{\footskip}{17pt}

\newcommand{\ifsodaelse}[2]{\ifthenelse{\isundefined{\SODAF}}{#2}{#1}}
\renewcommand{\theta}{\vartheta}
\ifsodaelse    {\usepackage{ltexpprt}\usepackage{amsmath}}{}
\usepackage{mathrsfs,amssymb,graphicx,verbatim}
\usepackage{paralist}

\newcommand\remove[1]{}

\newcommand{\rnote}[1]{}
\newcommand{\jnote}[1]{}

\newcommand{\e}{\varepsilon}
\newcommand{\R}{\mathbb{R}}

\newcommand{\N}{\mathbb{N}}

\newcommand{\Lip}{\mathrm{Lip}}
\newcommand{\vol}{\mathrm{vol}}

\newtheorem{theorem}{Theorem}[section]
\newtheorem{lemma}[theorem]{Lemma}

\newtheorem{remark}{Remark}[section]

\newtheorem{conjecture}{Conjecture}
\newtheorem{question}[conjecture]{Question}
\newcommand{\eqdef}{\stackrel{\mathrm{def}}{=}}
\date{}
\renewcommand{\t}{\theta}
\renewcommand{\le}{\leqslant}
\renewcommand{\ge}{\geqslant}

%\newcommand\R{{{\mathbf R}}}
%\newcommand\C{{{\mathbf C}}}
%\newcommand\Q{{{\mathbf Q}}}

%\newcommand\E{{{\mathbf E}}}
%\newcommand\x{{{\mathbf x}}}
%\newcommand\n{{{\mathbf n}}}
%\renewcommand\H{{{\mathcal H}}}
%\renewcommand\r{{{\mathbf r}}}

%\renewcommand\P{{{\mathcal P}}}
%\renewcommand\gcd{{{\operatorname{gcd}}}}
%\newcommand\lcm{{{\operatorname{lcm}}}}
%\newcommand\mes{{{\operatorname{mes}}}}
%\newcommand\diam{{{\operatorname{diam}}}}
%\renewcommand\mod{{{\operatorname{mod}}}}
%\newcommand\EXP{{{\operatorname{Exp}}}}
%\newcommand\Res{{{\operatorname{Res}}}}

%\renewcommand\Re{{{\operatorname{Re}}}}

%\newenvironment{proof}{\noindent {\bf Proof} }{\endprf\par}
%\def \endprf{\hfill  {\vrule height6pt width6pt depth0pt}\medskip}

% \swapnumbers
% \pagestyle{headings}
%\parindent = 0 pt
%\parskip = 12 pt

%\theoremstyle{plain}
  %\newtheorem{theorem}[subsection]{Theorem}
  %\newtheorem{conjecture}[subsection]{Conjecture}
  %\newtheorem{problem}[subsection]{Problem}
  %\newtheorem{assumption}[subsection]{Assumption}
  %\newtheorem{heuristic}[subsection]{Heuristic}
  
  %\newtheorem{fact}[subsection]{Fact}
  %\newtheorem{lemma}[subsection]{Lemma}
 % \newtheorem{corollary}[subsection]{Corollary}

%\theoremstyle{remark}
  %\newtheorem{remark}[subsection]{Remark}
  %\newtheorem{remarks}[subsection]{Remarks}
  %\newtheorem{example}[subsection]{Example}
 % \newtheorem{examples}[subsection]{Examples}

%\theoremstyle{definition}
  %\newtheorem{definition}[subsection]{Definition}
 % \newtheorem{convention}[subsection]{Convention}

\include{psfig}

%\begin{document}

\renewcommand{\epsilon}{\varepsilon}

\theoremstyle{remark}

\renewcommand{\phi}{\varphi}

\title{Discretization and affine approximation\\ in high dimensions}
\thanks{S.~L. was supported by NSF grant CCF-0832795. A.~N.
was supported by NSF grant CCF-0832795, BSF grant
2006009, and the Packard Foundation. Some of this work was completed when both authors were in residence at the MSRI Quantitative Geometry program.}

\author{Sean Li}
\address{Courant Institute, New York University, New York NY 10012}
\email{seanli@cims.nyu.edu}

\author{Assaf Naor}
\address{Courant Institute, New York University, New York NY 10012}
\email{naor@cims.nyu.edu}

\date{}

%\author{Terence Tao}
%\address{Department of Mathematics, UCLA, Los Angeles CA 90095-1555}
%\email{tao@@math.ucla.edu}

\begin{document}
\maketitle
\begin{abstract}
Lower estimates are obtained for the macroscopic scale of affine approximability of vector-valued Lipschitz functions on finite dimensional normed spaces, completing the work of Bates, Johnson, Lindenstrass, Preiss and Schechtman. This yields a new approach to Bourgain's discretization theorem  for superreflexive targets.
\end{abstract}

%\tableofcontents

\section{Introduction}

Let $X, Y$ be Banach space with (closed) unit balls $B_X,B_Y$, respectively. For $\e\in (0,1)$ define $r^{X\to Y}(\e)$ to be the supremum over those $r\in (0,1]$ such that  for every Lipschitz function $f:B_X\to Y$  there exists $y\in X$ and $\rho\in [r,\infty)$ such that $y+\rho B_X\subseteq B_X$, and there exists an affine mapping $A:X\to Y$ satisfying
\begin{equation}\label{eq:def r}
\sup_{z\in y+\rho B_X}\frac{\left\|f(z)-A(z)\right\|}{\rho}\le \e\|f\|_{\Lip},
\end{equation}
where $\|f\|_{\Lip}$ is the Lipschitz constant of $f$.
If no such $r\in (0,1]$ exists then set $r^{X\to Y}(\e)=0$. We call $r^{X\to Y}(\cdot)$ the {\bf modulus of affine approximability} corresponding to $X,Y$.

The assertion $r^{X\to Y}(\e)\ge r$ means that { every} Lipschitz function on the unit ball of $X$ is $\e$-close (after appropriate normalization) to an affine function on some sub-ball of radius $r$. Thus, while a differentiation statement corresponds to an assertion about the infinitesimal regularity of a function, bounding $r^{X\to Y}(\e)$ from below corresponds to proving a quantitative differentiation theorem about the regularity of Lipschitz functions on a maroscopic scale. This statement isn't quite precise, since there is no requirement of the affine mapping $A$ in~\eqref{eq:def r} to have any relation to the derivative of $f$ at $y$, but it turns out that for interesting applications it suffices (and necessary) to allow for an arbitrary affine approximation of $f$.

Bates, Johnson, Lindenstrass, Preiss and Schechtman introduced the above affine approximability problem in~\cite{BaJoLi}, where it was shown to have applications to the theory of nonlinear quotient mappings. Following~\cite{BaJoLi} we say that the space of Lipschitz mappings $$\Lip(X,Y)\eqdef\{f:X\to Y:\ \|f\|_\Lip<\infty\}$$ has the Uniform Approximation by Affine Property (UAAP) if $r^{X\to Y}(\e)>0$ for all $\e\in (0,1)$. A beautiful theorem of~\cite{BaJoLi} asserts that $\Lip(X,Y)$ has the UAAP if and only if one of the spaces $\{X,Y\}$ is finite dimensional, and the other space is superreflexive. Recall that a Banach space $Z$ is superreflexive if any Banach space that is finitely representable in $Z$ must be reflexive; equivalently all the ultrapowers of $Z$ are reflexive\footnote{See~\cite{DJT95} for background on finite representability and ultrapowers of Banach spaces.}. Due to deep works of James~\cite{Jam64,Jam72}, Enflo~\cite{Enf72} and Pisier~\cite{Pis75}, we know that $Z$ is superreflexive if and only if it admits an equivalent norm $||\cdot||$ for which there exist $p\in [2,\infty)$ and $K\in [1,\infty)$ such that
\begin{equation}\label{eq:4 point}
\forall\ x,y\in Z,\quad 2\|x\|^p+\frac{2}{K^p}\|y\|^p\le \|x+y\|^p+\|x-y\|^p.
\end{equation}
A norm that satisfies~\eqref{eq:4 point} is said to be uniformly convex of power type $p$; readers who are not familiar with the theory of superreflexivity can take the above renorming statement as the definition of superreflexivity. For concreteness, we recall~\cite{Fig76,BCL94} that for $q\in (1,\infty)$ the usual norm on an $L_q(\mu)$ space satisfies~\eqref{eq:4 point}  with $p=\max\{q,2\}$ and $K=\max\{1/\sqrt{q-1},1\}$.

Assume from now on that $n=\dim X<\infty$ and $Y$ is superreflexive. The theorem of Bates, Johnson, Lindenstrass, Preiss and Schechtman  says that  $r^{X\to Y}(\e),r^{Y\to X}(\e)>0$ for every $\e\in (0,1)$. The proof in~\cite{BaJoLi} of  $r^{Y\to X}(\e)>0$  is effective, yielding a concrete lower bound on $r^{Y\to X}(\e)$. This lower bound is quite small: a $O(n)$-fold iterated exponential of $-1/\e$ and  geometric parameters that measure the degree to which $Y$ is superreflexive. We leave the investigation of the true asymptotic behavior of $r^{Y\to X}(\e)$ as an interesting open problem.

Our main purpose here is to obtain a concrete lower bound on $r^{X\to Y}(\e)$. While we are partly motivated by an application of such bounds to Bourgain's discretization problem, as will be described in Section~\ref{sec:bourgain}, our main motivation is that the proof in~\cite{BaJoLi} of the estimate $r^{X\to Y}(\e)>0$  proceeds by contradiction using an ultrapower argument, and as such it does not yield any concrete quantitative information on $r^{X\to Y}(\e)$.

We briefly recall the argument of~\cite{BaJoLi}. The contrapositive assumption $r^{X\to Y}(\e)=0$ means that for every $k\in \N$ there is a $1$-Lipschitz function $f_k:B_X\to Y$ with $f_k(0)=0$ such that for all balls $y+rB_X\subseteq B_X$ with $r\ge 1/k$ and for all affine mappings $A:X\to Y$ we have $\|f_k-A\|_{L_\infty(y+rB_X)}\ge \e r$. Let $\mathscr U$ be a free ultrafilter on $\N$ and consider the mapping $f_\U:B_X\to Y_\U$ given by $f(x)=(f_k(x))_{k=1}^\infty$. Here $Y_\U$ denotes the ultrapower of $Y$; since $Y$ is superreflexive we are ensured that $Y_\U$ is reflexive. A moment of thought reveals that $f_\U$ is $1$-Lipschitz yet it cannot have a point of differentiability. This contradicts the fact~\cite{Gel38} that reflexive spaces have the Radon-Nikod\'ym property  (see~\cite[Ch.~5]{BL00}), and hence $f_\U$ is differentiable almost everywhere. Due to the ineffectiveness of this argument, the estimation of the fundamental parameter $r^{X\to Y}(\e)$ is a basic question that~\cite{BaJoLi} left open. This problem is resolved here via the following theorem.

\begin{theorem}\label{thm:main intro}
Fix $n\in \N$, $p\in [2,\infty)$ and $K\in [1,\infty)$. Assume that $n=\dim X<\infty$ and the norm of $Y$ satisfies~\eqref{eq:4 point}. Then for all $\e\in \left(0,\frac12\right)$ we have
\begin{equation}\label{eq:our estimate r}
r^{X\to Y}(\e)\ge\left\{\begin{array}{ll} \e^{(16K/\e)^p} &
\mathrm{if}\ n=1,\\\e^{K^pn^{20(n+p)}/\e^{2p+2n-2}}&\mathrm{if}\
n\ge 2.\end{array}\right.
\end{equation}
%where $c\in (0,\infty)$ is a universal constant.
\end{theorem}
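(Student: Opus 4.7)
The plan is to exploit the $p$-uniform convexity of $Y$ via a hierarchical midpoint inequality, obtain a Sobolev-type multi-scale bound on the second-order differences of $f$, and then extract a sub-ball of $B_X$ on which $f$ is close to affine. \textit{Case $n=1$:} parametrize $B_X$ as $[-1,1]$ and, for each subinterval $I=[a,b]$ with midpoint $m$, set $e(I):= f(m) - (f(a)+f(b))/2$. Substituting $x = (f(b)-f(a))/2$ and $y = e(I)$ into~\eqref{eq:4 point} gives
\[
\|f(m)-f(a)\|^p + \|f(b)-f(m)\|^p \,\geq\, \frac{\|f(b)-f(a)\|^p}{2^{p-1}} + \frac{2\|e(I)\|^p}{K^p},
\]
and telescoping this through the dyadic decomposition of $[-1,1]$ produces
\[
\sum_{I\text{ dyadic}} \frac{\|e(I)\|^p}{|I|^{p-1}} \,\leq\, \frac{K^p}{2^p}\,\|f\|_{\Lip}^p,
\]
with the analogous bound scaled by $|J|$ inside any subinterval $J$. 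A H\"older computation then shows that if $\Sigma(J) := \sum_{I\subseteq J}\|e(I)\|^p/|I|^{p-1} \leq \varepsilon^p |J|/2^{p-1}$, then the affine interpolation $A_J$ of $f$ at the endpoints of $J$ satisfies $\|f - A_J\|_{L^\infty(J)} \leq \varepsilon|J|$, furnishing the required affine approximation on $J$.

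The core task is now to extract such a $J$ of length $\geq \varepsilon^{(16K/\varepsilon)^p}$. The normalized energy $\Sigma(J)/|J|$ is universally bounded by $K^p/2^p$, and a direct computation shows that descending into the child of $J$ with smaller $\Sigma$ reduces $\Sigma(J)/|J|$ by at least $(\delta(J)/|J|)^p$, where $\delta(J) := \|e(J)\|$ is the midpoint deviation of $J$. Hence along any such best-child descent chain, at most $\lesssim (K/\varepsilon)^p$ steps can have $\delta(J)/|J| > \varepsilon$, and the remaining steps are ``gentle''. A refined multi-scale pigeonhole that simultaneously tracks positions and scales, applied over a window of $\lesssim (K/\varepsilon)^p \log(1/\varepsilon)$ consecutive levels, then produces a dyadic $J$ whose entire dyadic subtree has midpoint deviations uniformly at most $(\varepsilon/2)|I|$. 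This forces $\Sigma(J)/|J| \leq \varepsilon^p/2^{p-1}$ and yields the stated $n=1$ bound.

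\textit{Case $n \geq 2$:} I would reduce to the one-dimensional case by passing to line restrictions of $f$. Fix an $\varepsilon^c$-net $\mathcal{N}$ of the unit sphere $S_X$ of cardinality $(Cn/\varepsilon)^{Cn}$, and for each $v \in \mathcal{N}$ apply the 1D Sobolev inequality to $t \mapsto f(y+tv)$, integrated over a translation family in $y$. Summing over $v \in \mathcal{N}$ yields an $n$-dimensional multi-scale bound on the total midpoint non-affineness of $f$ in all net directions. A higher-dimensional analogue of the 1D combinatorial extraction then locates a sub-ball $y_0 + \rho B_X$ of radius $\rho \geq \varepsilon^{K^p n^{20(n+p)}/\varepsilon^{2p+2n-2}}$ on which $f$ has small midpoint deviation along every net direction at every dyadic scale. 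The \emph{main obstacle}, and the step that drives the dimension-dependent exponent in~\eqref{eq:our estimate r}, is the final reconstruction: converting this direction-by-direction affine control on the sub-ball into a single uniform affine approximation of $f$ itself, which requires matching ``linear parts'' consistently across the net and absorbing the $(Cn/\varepsilon)^{Cn}$ net factor through additional multi-scale pigeonholing.
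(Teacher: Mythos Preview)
Your $n=1$ outline shares the paper's spirit but has a real gap. The implication ``this forces $\Sigma(J)/|J|\le\e^p/2^{p-1}$'' does not follow: bounding each $\|e(I)\|\le(\e/2)|I|$ only bounds the level-$k$ contribution to $\Sigma(J)$ by $(\e/2)^p|J|$, and with infinitely many levels the subtree condition gives no improvement on $\Sigma(J)/|J|$ beyond the a priori Sobolev bound. You could bypass this by deriving the affine approximation directly from the path estimate $\|f(x)-A_J(x)\|\le\sum_k\|e(I_k)\|<\e|J|$, so the detour through $\Sigma$ is unnecessary. The genuine gap is the extraction: your best-child descent controls $\delta(J)/|J|$ only along a single nested chain, and the ``refined multi-scale pigeonhole'' that would upgrade this to control of an entire subtree is never specified; I do not see how the chain data alone yield it. The paper avoids subtrees entirely by working with the normalized increment energy $E_m^{a,b}(h)$ of~\eqref{eq:def Eab}, which is increasing in $m$ and bounded above by $\|h\|_{\Lip}^p$. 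Lemma~\ref{lem:use UC} shows $E_m^{a,b}-E_0^{a,b}\ge(2K)^{-p}\bigl(\max_k\|h-L_h^{a,b}\|\bigr)^p$, so failure of $\e$-affine approximation on every subinterval of length at least $R$ forces, after averaging over the level-$km$ partition of $[-1,1]$, the telescoping bound $E_{(k+1)m}^{-1,1}\ge E_{km}^{-1,1}+(\e/(8K))^p$, contradicting $E\le 1$ once $k$ exceeds $(8K/\e)^p$.

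For $n\ge2$ your sketch stops exactly at the step that carries the content. The paper does not use a net of directions; via John's theorem it works with only the $n$ coordinate axes. The reconstruction you flag as the ``main obstacle'' is Lemma~\ref{lem:walsh}: if the axis-wise deviation $\mathscr{D}_\theta^m(f)(x)\le\e$ on the cube $x+[0,\theta]^n$, an inductive Walsh (multilinear) expansion with coefficients $\|v_S\|\le 2^{|S|-1}$ approximates $f$ on the grid, and restricting to the sub-cube of side $\sqrt{\e}\,\theta$ suppresses the nonlinear Walsh terms, leaving a genuine affine approximation with error $O(n^2\e\theta)$. The extraction uses a two-parameter quantity $H_{m,k}^\theta(f)(x)$ aggregating the one-dimensional energies $E_k$ over all $n$ axes and all grid base-points; the identity~\eqref{eq:recursive idenity} and estimate~\eqref{eq:for recursion multi dim} replicate the one-dimensional telescoping and yield the contradiction~\eqref{eq:contradiction n}. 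Neither ingredient appears in your outline, and replacing $n$ axes by a net of $(Cn/\e)^{Cn}$ directions would likely inflate the exponent beyond~\eqref{eq:our estimate r} even if the reconstruction could be carried out.
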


In Section~\ref{sec:example} we present an example showing that if $\e\in (0,\frac12)$ and $p\in [2,\infty)$ then for $X_0=\ell_2^n$  and $Y_0=\ell_2(\ell_p)$ we have $$r^{X_0\to Y_0}(\e)\le \frac{1}{\sqrt{n}}e^{-(\kappa/\e)^p},$$ where $\kappa\in (0,\infty)$ is a universal constant. Note that $\ell_2(\ell_p)$ satisfies~\eqref{eq:4 point}; see~\cite{Fig76}. Thus, when $n=1$ Theorem~\ref{thm:main intro} is quite sharp as $\e\to 0$  (up to a $\log(1/\e)$ term in the exponent), but it remains a very interesting open problem to determine the asymptotic behavior of $r^{X\to Y}(\e)$ as $n\to \infty$. It is worthwhile to single out the purely Hilbertian special case of this problem.
\begin{question}\label{q:hilbertian}
What is the asymptotic behavior of $r^{\ell_2^n\to \ell_2}\left(\frac12\right)$ as $n\to \infty$?
\end{question}

We note that despite the fact that the gap between~\eqref{eq:our estimate r} and the above upper bound on $r^{X\to Y}(\e)$ is very large as $n\to \infty$, the lower estimate on $r^{X\to Y}(\e)$ in~\eqref{eq:our estimate r} is sufficiently strong to match the best-known bound in Bourgain's discretization theorem for superreflexive targets; see Section~\ref{sec:bourgain}.

In our forthcoming  article~\cite{HLN12}, written jointly with Tuomas Hyt\"onen, we study a natural variant of the UAAP by replacing the $L_\infty$ requirement in~\eqref{eq:def r} by
$$
\left(\frac{1}{\rho^n\vol(B_X)}\int_{y+\rho B_X}\left(\frac{\|f(z)-A(z)\|}{\rho}\right)^p dz\right)^{1/p}\le \e \|f\|_{\Lip}.
$$
In this setting, we obtain in~\cite{HLN12} asymptotically stronger  lower bounds on $\rho$ when $Y$ is a UMD Banach space (see~\cite{Bur01} for a detailed discussion of UMD spaces). Unlike our proof of Theorem~\ref{thm:main intro}, which is entirely geometric, the arguments in~\cite{HLN12} are based on  vector-valued Littlewood-Paley theory.

\subsection{Bourgain's discretization theorem}\label{sec:bourgain}
Let $(X,d_X)$ and $(Y,d_Y)$ be metric spaces. The distortion of $X$ in $Y$, denoted $c_Y(X)$, is the infimum over those $D\in [1,\infty]$ for which there exists $f:X\to Y$ and $s\in (0,\infty)$ satisfying $$
\forall\ x,y\in X,\quad sd_X(x,y)\le d_Y(f(x),f(y))\le Dsd_X(x,y).$$

Suppose now that  $(X,\|\cdot\|_X)$ and $(Y,\|\cdot\|_Y)$ are normed spaces with $\dim(X)<\infty$ and $\dim(Y)=\infty$. For $\e\in [0,1)$ let $\delta_{X\hookrightarrow Y}(\e)$ be the supremum over those $\delta\in (0,1)$ such that every $\delta$-net $\mathcal{N}_\delta$ of $B_X$ satisfied $c_Y(\mathcal{N}_\delta)\ge (1-\e)c_Y(X)$.

A classical theorem of Ribe asserts that $\delta_{X\hookrightarrow Y}(\e)>0$ for all $\e\in (0,1)$. A different proof of this fact, due to Heinrich and Mankiewicz,  was obtained in~\cite{HM82}. Bourgain~\cite{Bou87} discovered yet another proof of the positivity of $\delta_{X\hookrightarrow Y}(\e)$, which, unlike previous proofs, yields the following concrete estimate, known as {\em Bourgain's discretization theorem}.
\begin{equation}\label{eq:Bourgain}
\delta_{X\hookrightarrow Y}(\e)\ge e^{-(n/\e)^{O(n)}}.
\end{equation}

It is an intriguing open question to determine the asymptotic behavior of the best possible lower bound on $\inf\{\delta_{X\hookrightarrow Y}(\e):\ \dim(X)=n\}$. This is of interest even for special classes of normed spaces $Y$, though there has been little progress on this problem besides the improved estimate $\delta_{X\hookrightarrow L_p}(\e)\gtrsim \e^2/n^{5/2}$, which was obtained in~\cite{GNS11} (here $p\in [1,\infty)$ and the implied constant is independent of $p$).

We shall now describe a different approach to Bourgain's discretization theorem based on Theorem~\ref{thm:main intro}. If an affine mapping is bi-Lipschitz on a fine enough net of a ball $y+\rho B_X$ then it is also bi-Lipschitz on all of $X$. It is therefore natural to approach the problem of estimating $\delta_{X\hookrightarrow Y}(\e)$ by first extending the embedding of the net $\mathcal{N}_\delta$ to a Lipschitz function defined on all of $X$, and then finding a large enough ball on which the extended function is approximately affine. By the theorem of Bates, Johnson, Lindensrauss, Preiss and Schechtman, for this strategy to work we need $Y$ to be superreflexive. Bourgain's discretization theorem is interesting even for superreflexive targets, and moreover the estimate~\eqref{eq:Bourgain} is the best known estimate even with this additional restriction on $Y$. It turns out that the above strategy, when combined with our estimate~\eqref{eq:our estimate r}, suffices to match Bourgain's bound~\eqref{eq:Bourgain} when $Y$ is superreflexive. The details of this link between Theorem~\ref{thm:main intro} and~\eqref{eq:Bourgain} are explained below.

Fix $\e\in (0,1)$, $p\in [2,\infty)$ and $K\in [1,\infty)$. Suppose that $\dim(X)=n\ge 2$ and the norm of $Y$ satisfies the uniform convexity condition~\eqref{eq:4 point}. Set
\begin{equation}\label{eq:choose delta}
\delta=e^{-K^p(n/\e)^{C(n+p)}},
\end{equation}
where $C\in (1,\infty)$ is a universal constant that will be determined later.

Let $\mathcal{N}_\delta$ be a $\delta$-net of $B_X$ and write $D=c_Y(\mathcal{N}_\delta)$. Note that, by John's theorem~\cite{Jo}, we have the a priori bound $D\le n$. Take $f:\mathcal{N}_\delta\to Y$ satisfying
\begin{equation}\label{eq:f assumption}
\forall\ x,y\in \mathcal{N}_\delta,\quad \|x-y\|_X\le \|f(x)-f(y)\|_Y\le \left(1+\frac{\e}{16}\right)D\|x-y\|_X.
\end{equation}
 By a Lipschitz extension theorem of Johnson, Lindenstrauss and Schechtman~\cite{JLS86}, there exists $F:X\to Y$ that coincides with $f$ when restricted to $\mathcal{N}_\delta$, and $\|F\|_{\Lip}\le cnD$, where $c\in (1,\infty)$ is a universal constant.

 By Theorem~\ref{thm:main intro} there exist $y\in X$, $z\in Y$, a linear mapping $T:X\to Y$, and a radius
\begin{equation}\label{eq:lower rho}
\rho\ge \e^{K^pn^{20(n+p)}(32cnD/\e)^{2p+2n-2}}.
\end{equation}
 such that $y+\rho B_X\subseteq B_X$ and
\begin{equation}\label{eq:F approximation}
\forall\ x\in y+\rho B_X,\quad \|F(x)-z-Tx\|_Y\le \frac{\e}{32}\rho.
\end{equation}
Note that it follows from~\eqref{eq:lower rho} that we can choose the constant $C$ in~\eqref{eq:choose delta} so that
\begin{equation}\label{eq:lower rho 2}
\rho\ge \frac{64n\delta}{\e}.
\end{equation}

Fix $u\in X$ with $\|u\|_X=1$. Choose $v,w\in \mathcal{N}_\delta\cap (y+\rho B_X)$ such that $\|v-y\|_X\le \delta$ and $\|w-y-\frac{\rho}{2}u\|_X\le \delta$. Thus $\|w-v-\frac{\rho}{2}u\|_X\le 2\delta$, and  consequently $\|w-v\|_X\in [\rho/2-2\delta,\rho/2+2\delta]$. Using the fact that $F$ extends $f$,
\begin{multline*}
\|Tw-Tv\|_Y\stackrel{\eqref{eq:F approximation}}{\le} \|f(w)-f(v)\|_Y+\frac{\e\rho}{16}\stackrel{\eqref{eq:f assumption}}{\le} \left(1+\frac{\e}{16}\right)D\|w-v\|_X+\frac{\e\rho}{16}\\ \le \left(1+\frac{\e}{16}\right)D\left(\frac{\rho}{2}+2\delta\right)+\frac{\e\rho}{16}\stackrel{\eqref{eq:lower rho 2}}{\le} \left(1+\frac{\e}{4}\right)\frac{\rho}{2}D.
\end{multline*}
Hence
$\|Tu\|_Y\le \frac{2}{\rho}\|Tw-Tv\|_Y+\frac{2\|T\|}{\rho}\left\|w-v-\frac{\rho}{2}u\right\|_X\le
\left(1+\frac{\e}{4}\right)D+\frac{4\delta\|T\|}{\rho}$. Since this holds for all unit vectors $u\in X$,
\begin{equation}\label{eq:norm T}
\|T\|\le \frac{1+\e/4}{1-4\delta/\rho}D\le \left(1+\frac{\e}{2}\right)D\le 2n.
\end{equation}
Now,
\begin{equation*}
\|Tw-Tv\|_Y\stackrel{\eqref{eq:F approximation}}{\ge} \|f(w)-f(v)\|_Y-\frac{\e\rho}{16}\stackrel{\eqref{eq:f assumption}}{\ge} \|w-v\|_X-\frac{\e\rho}{16} \ge \frac{\rho}{2}-2\delta-\frac{\e\rho}{16}\ge \left(1-\frac{\e}{4}\right)\frac{\rho}{2}.
\end{equation*}
Hence,
$$
\|Tu\|_Y\ge \frac{2}{\rho}\|Tw-Tv\|_Y-\frac{2\|T\|}{\rho}\left\|w-v-\frac{\rho}{2}u\right\|_X\stackrel{\eqref{eq:norm T}}{\ge}
1-\frac{\e}{4}-\frac{8n\delta}{\rho}\stackrel{\eqref{eq:lower rho 2}}{\ge} 1-\frac{\e}{2}.
$$
We have proved that
$
c_Y(X)\le \frac{1+\e/2}{1-\e/2}D=\frac{1+\e/2}{1-\e/2}c_Y(\mathcal{N}_\delta)\le \frac{1}{1-\e}c_Y(\mathcal{N}_\delta).
$
 Thus, recalling the choice of $\delta$ in~\eqref{eq:choose delta},
 \begin{equation}\label{eq:the hook delta}
 \delta_{X\hookrightarrow Y}(\e)\ge e^{-K^p(n/\e)^{C(n+p)}}.
 \end{equation}

\begin{remark} {\em In fact, we have the general estimate
  \begin{equation}\label{eq:delta r relation}
  \delta_{X\hookrightarrow Y} (\e)\ge \frac{\e}{n}\cdot r^{X\to Y}\left(\frac{\kappa\e}{c_Y(X)}\right),
   \end{equation}
   where $\kappa\in (0,\infty)$ is a universal constant. This estimate follows from a more careful application of the above reasoning. Specifically, we used the Lipschitz extension theorem of Johnson, Lindenstrauss and Schechtman~\cite{JLS86} to obtain the function $F$. This theorem ignores the fact that $f$ was defined on a $\delta$-net: it would apply equally well if $f$ were defined on {\em any} subset of $X$. One can exploit the additional information that the domain of $f$ is a net by invoking an approximate Lipschitz extension theorem of Bourgain~\cite{Bou87}. This theorem states that for every $\tau\in (20\delta,1)$ there exists a function $F_\tau:X\to Y$ such that $\|F_\tau(x)-f(x)\|_Y\le \tau$ for every $x\in \mathcal{N}_\delta$ and the Lipschitz constant of $F_\tau$ on $\frac12 B_X$ is $(1+O(n\delta/\tau))(1+\e/16)D$ (this formulation of Bourgain's approximate extension theorem is not stated explicitly in~\cite{Bou87}, but it easily follows from the argument in~\cite[Sec.~3]{GNS11}). Now, one can deduce~\eqref{eq:delta r relation} by repeating mutatis mutandis the above proof while optimizing over $\tau$. We omit the details since the resulting estimate, when applied to our bounds~\eqref{eq:our estimate r}, only affects the constant $C$ in~\eqref{eq:the hook delta}.}
\end{remark}

%\begin{remark}\label{rem:almost extension}
%In the above argument we extended the map $f:\mathcal{N}_\delta\to Y$ by invoking a Lipschitz extension
%theorem of Johnson, Lindenstrauss and Schechtman~\cite{JLS86}. This theorem ignores that fact that
%$\mathcal{N}_\delta$ is a $\delta$-net of $B_X$.
%\end{remark}
\section{Proof of Theorem~\ref{thm:main intro} when $n=1$}\label{sec:n=1}

The proof of Theorem~\ref{thm:main intro} when $n=1$ follows well-established metric differentiation methodology. This type of reasoning, also known as the approximate midpoint argument, seems to have been first used by Enflo in his classical proof that $L_1$ and $\ell_1$ are not uniformly homeomorphic; see~\cite{Ben85}. The basic idea is that a Lipschitz function $f:\R\to Y$ must map the midpoints between many pairs of points $x,y\in \R$ to ``almost midpoints" of $f(x)$ and $f(y)$. See Chapter 10 of~\cite{BL00} for a precise formulation of this principle. One can iterate this idea to deduce that $f$ must map many ``discretized geodesic segments" to ``discretized almost geodesics". Such an iteration of the midpoint argument is contained in e.g.~\cite[Prop.~1.4.9]{Nao98}, and a striking recent application of this type of reasoning can be found in~\cite{EFW07}. When the target space is uniformly convex, approximate geodesics must be close to straight lines. This rigidity statement explains why one can hope to find a macroscopically large region on which $f$ is almost affine. In order to obtain good quantitative control on the size of such a region one  follows the general strategy that is explained in Appendix~2 of~\cite{CKN09}. Using the terminology of~\cite{CKN09}, the ``coercive quantity" in our setting is the functional $E_m^{a,b}(\cdot)$ defined below.

In Section~\ref{sec:n>1} we build on the tools developed in this section to deduce Theorem~\ref{thm:main intro} when $n\ge 2$. In this setting one must deal with higher-dimensional phenomena, and to obtain good dimension-dependent bounds. We will argue that there must exist a cube on which a given Lipschitz function maps all axis-parallel discretized line segments to almost-straight lines. This does not imply that the function itself is almost affine on the same cube: at best it means that it is almost multi-linear. Therefore an additional argument is needed in order to find a scale on which the function is almost affine. Moreover, to obtain good control on this scale we reason about a more complicated (two-parameter) coercive quantity; see~\eqref{eq:two parameter H}.

\begin{lemma}\label{lem:use UC}
Fix $p\in [2,\infty)$. Suppose that $(Y,\|\cdot\|_Y)$ is a Banach space satisfying the uniform convexity condition~\eqref{eq:4 point}. Fix $a,b\in \R$ with $a<b$ and $h:[a,b]\to Y$. For $m\in \N\cup\{0\}$ define
\begin{equation}\label{eq:def Eab}
E_m^{a,b}(h)\eqdef \frac{1}{2^m}\sum_{k=0}^{2^m-1}
\left\|\frac{h(a+(k+1)2^{-m}(b-a))-h(a+k2^{-m}(b-a))}{2^{-m}(b-a)}\right\|_Y^p.
\end{equation}
Then
\begin{equation*}\label{eq:lower E}
E_m^{a,b}(h)\ge \frac{\left\|h(b)-h(a)\right\|_Y^p}{(b-a)^p}+\frac{1}{(2K)^p}\max_{k\in \{0,\ldots,2^m\}}
\frac{\left\|h\left(a+\frac{k}{2^m}(b-a)\right)-L_h^{a,b}\left(a+\frac{k}{2^m}(b-a)\right)\right\|_Y^p}
{(b-a)^p},
\end{equation*}
where $K\in (0,\infty)$ is the constant appearing in~\eqref{eq:4 point} and $L_h^{a,b}(h):[a,b]\to Y$ is the linear interpolation of the values of $h$ on the endpoints of the interval $[a,b]$, i.e.,
\begin{equation}\label{eq:def linear interpolation}
\forall t\in \R,\quad
L_h^{a,b}(t)\eqdef\frac{t-a}{b-a}h(b)+\frac{b-t}{b-a}h(a).
\end{equation}
\end{lemma}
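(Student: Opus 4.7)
The plan is to combine the uniform convexity condition~\eqref{eq:4 point}, applied at every dyadic scale, with a multiresolution expansion of the deviation $h(t_k^m) - L_h^{a,b}(t_k^m)$, where $t_k^m \eqdef a + k(b-a)/2^m$. I introduce the dyadic slopes $\sigma_k^j \eqdef (h(t_{k+1}^j) - h(t_k^j))/((b-a)/2^j)$, so that $E_j^{a,b}(h)$ is a simple $p$-power average of the $\sigma_k^j$, and by construction $\sigma_k^j = \tfrac12(\sigma_{2k}^{j+1} + \sigma_{2k+1}^{j+1})$. Applying~\eqref{eq:4 point} with $x = \sigma_k^j$ and $y = \tfrac12(\sigma_{2k+1}^{j+1} - \sigma_{2k}^{j+1})$, so that $x\pm y \in \{\sigma_{2k}^{j+1}, \sigma_{2k+1}^{j+1}\}$, and then averaging over $k$ yields the scale-by-scale improvement
\[
E_{j+1}^{a,b}(h) - E_j^{a,b}(h) \ge \frac{1}{(2K)^p\,2^j}\sum_{k=0}^{2^j-1}\|\sigma_{2k+1}^{j+1} - \sigma_{2k}^{j+1}\|_Y^p.
\]
Telescoping from $j = 0$ (where $E_0^{a,b}(h) = \|h(b) - h(a)\|_Y^p/(b-a)^p$) to $j = m-1$ produces a lower bound on the gap $E_m^{a,b}(h) - E_0^{a,b}(h)$ by a weighted sum of slope discrepancies across every dyadic scale up to $m$.

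Independently, I expand the deviation via piecewise linear interpolants: let $L^j:[a,b] \to Y$ be the interpolant of $h$ through all level-$j$ dyadic nodes, so $L^0 = L_h^{a,b}$ and $L^m(t_k^m) = h(t_k^m)$. The refinement $L^{j+1} - L^j$ is a hat function on each level-$j$ interval $[t_i^j, t_{i+1}^j]$ that vanishes at the endpoints and whose apex at the inserted midpoint $t_{2i+1}^{j+1}$ equals
\[
h(t_{2i+1}^{j+1}) - \tfrac12(h(t_i^j) + h(t_{i+1}^j)) = \frac{b-a}{2^{j+2}}\bigl(\sigma_{2i}^{j+1} - \sigma_{2i+1}^{j+1}\bigr).
\]
Writing $h(t_k^m) - L_h^{a,b}(t_k^m) = \sum_{j=0}^{m-1}(L^{j+1}(t_k^m) - L^j(t_k^m))$ and applying the triangle inequality therefore reduces the lemma to bounding $\sum_{j=0}^{m-1} 2^{-j-2}\|\sigma_{2i_j(k)+1}^{j+1} - \sigma_{2i_j(k)}^{j+1}\|_Y$, where $i_j(k)$ indexes the unique level-$j$ dyadic interval containing $t_k^m$.

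The two estimates are joined by H\"older with conjugate exponents $p$ and $q = p/(p-1)$: factor $2^{-j-2} = \tfrac14\cdot 2^{-j/q}\cdot 2^{-j/p}$, so the $\ell^q$-piece sums to a geometric series bounded by a universal constant $2^{1/q}$, while the $\ell^p$-piece $\sum_j 2^{-j}\|\sigma_{2i_j(k)+1}^{j+1} - \sigma_{2i_j(k)}^{j+1}\|_Y^p$ is dominated term by term by the full inner sum appearing in the telescoping estimate from the first step, hence by $(2K)^p(E_m^{a,b}(h) - E_0^{a,b}(h))$. Raising everything to the $p$-th power yields a bound of the form $(K^p/2)(E_m^{a,b}(h) - E_0^{a,b}(h))$, which is actually stronger than the advertised constant $(2K)^{-p}$. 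The one subtle technical point is precisely this Hölder pairing: the hidden weight $2^{-j}$ inside the $\ell^p$-factor must cancel against the $2^j$ in the denominator of the telescoping step so that the final constant depends only on $p$ and $K$ and is uniform in both $m$ and $k$.
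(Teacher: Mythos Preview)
Your argument is correct and takes a genuinely different route from the paper. The paper proceeds by induction on $m$, proving the sharper bound
\[
E_m(h)\ge E_0(h)+\frac{2^p}{K^p(3-2^{-(m-1)})^{p-1}}\left\|h\!\left(\tfrac{k}{2^m}\right)-L_h\!\left(\tfrac{k}{2^m}\right)\right\|_Y^p,
\]
handling even $k$ via monotonicity $E_m\ge E_{m-1}$ and odd $k=2j+1$ by one application of~\eqref{eq:4 point} combined, through the H\"older-type inequality $u^p/\alpha^{p-1}+v^p/\beta^{p-1}\ge(u+v)^p/(\alpha+\beta)^{p-1}$, with the inductive hypothesis at the neighbors $j/2^{m-1}$ and $(j+1)/2^{m-1}$. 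Your approach instead telescopes $E_m-E_0$ in one shot using~\eqref{eq:4 point} at every scale, expresses $h(t_k^m)-L_h^{a,b}(t_k^m)$ as a sum of hat-function apices $\tfrac{b-a}{2^{j+2}}(\sigma_{2i_j}^{j+1}-\sigma_{2i_j+1}^{j+1})$, and closes with H\"older on the geometric weights $2^{-j}$. Both arguments ultimately use the same dyadic slope differences, but your multiresolution packaging is cleaner and in fact yields the constant $2/K^p$ rather than $1/(2K)^p$; the paper's induction, by contrast, tracks explicitly how the constant degrades with $m$ (through the factor $(3-2^{-(m-1)})^{p-1}$), which is unnecessary for the lemma as stated but makes the mechanism more visible.
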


\begin{proof} We may assume without loss of generality that $a=0$ and $b=1$. In this case, denote for the sake of simplicity $E_m^{0,1}(h)=E_m(h)$ and $L_h^{0,1}=L_h$. We will actually prove the following slightly stronger statement by induction on $m$: for every $k\in \{0,\ldots,2^m\}$,
\begin{equation}\label{eq:better inductive}
E_m(h)\ge \|h(1)-h(0)\|_Y^p+\frac{2^p}{K^p\left(3-2^{-(m-1)}\right)^{p-1}}\cdot
\left\|h\left(\frac{k}{2^m}\right)-L_h\left(\frac{k}{2^m}\right)\right\|_Y^p.
\end{equation}
Since $E_0(h)=\|h(1)-h(0)\|_Y^p$, the desired inequality~\eqref{eq:better inductive} holds as equality when $m=0$.  Fix $m\in \N$ and assume that~\eqref{eq:better inductive} holds true with $m$ replaced by $m-1$.

Convexity of $\|\cdot\|_Y^p$ implies that for every $m\in \N$,
\begin{eqnarray}\label{eq:monotone}
E_m(h)&=&\nonumber2^{m(p-1)}\sum_{k=0}^{2^m-1}\left\|h\left(\frac{k+1}{2^m}\right)-h\left(\frac{k}{2^m}\right)
\right\|_Y^p \\&=&\nonumber 2^{m(p-1)+1}\sum_{j=0}^{2^{m-1}-1}\frac{\left\|h\left(\frac{2j+1}{2^m}\right)-h\left(\frac{2j}{2^m}\right)
\right\|_Y^p
+\left\|h\left(\frac{2j+2}{2^m}\right)-h\left(\frac{2j+1}{2^m}\right)\right\|_Y^p}{2}\\
&\ge& 2^{m(p-1)+1}\sum_{j=0}^{2^{m-1}-1}
\left\|\frac{h\left(\frac{j+1}{2^{m-1}}\right)-h\left(\frac{j}{2^{m-1}}\right)}{2}\right\|_Y^p\nonumber\\&=&E_{m-1}(h).
\end{eqnarray}
Hence, if $k\in \{0,\ldots,2^{m}\}$ is even then by the inductive hypothesis
\begin{multline*}
E_m(h)\stackrel{\eqref{eq:monotone}}{\ge} E_{m-1}(h)\stackrel{\eqref{eq:better inductive}}{\ge} E_0(h)+\frac{2^p}{K^p\left(3-2^{-(m-2)}\right)^{p-1}}\left\|h\left(\frac{k/2}{2^{m-1}}
\right)-L_h\left(\frac{k/2}{2^{m-1}}\right)\right\|_Y^p\\
\ge E_0(h)+\frac{2^p}{K^p\left(3-2^{-(m-1)}\right)^{p-1}}
\left\|h\left(\frac{k}{2^{m}}\right)-L_h\left(\frac{k}{2^{m}}\right)\right\|_Y^p.
\end{multline*}
It therefore suffices to prove~\eqref{eq:better inductive} when $k$ is odd, say, $k=2j+1$. In this case, by reasoning analogously to~\eqref{eq:monotone}, we see that
\begin{eqnarray}\label{eq:use convexity}
&&\nonumber\!\!\!\!\!\!\!\!\!\!\!\!\frac{\!E_m(h)- E_{m-1}(h)}{2^{m(p-1)+1}}\\\nonumber\!\!\!\!\!&\ge&
\frac{\left\|h\left(\frac{2j+1}{2^m}\right)-h\left(\frac{2j}{2^m}\right)\right\|_Y^p
+\left\|h\left(\frac{2j+2}{2^m}\right)-h\left(\frac{2j+1}{2^m}\right)\right\|_Y^p}{2}
-\left\|\frac{h\left(\frac{2j+2}{2^m}\right)-h\left(\frac{2j}{2^m}\right)}{2}\right\|_Y^p\\
&\ge& \frac{1}{K^p}\left\|\frac{h\left(\frac{j}{2^{m-1}}\right)+h\left(\frac{j+1}{2^{m-1}}\right)}{2}-
h\left(\frac{k}{2^m}\right)\right\|_Y^p,
\end{eqnarray}
where in~\eqref{eq:use convexity} we used~\eqref{eq:4 point} with $$
x=\frac{h\left(\frac{2j+2}{2^m}\right)-h\left(\frac{2j}{2^m}\right)}{2}\quad
\mathrm{and}\quad
y= h\left(\frac{2j+1}{2^m}\right)-\frac{h\left(\frac{2j}{2^m}\right)+h\left(\frac{2j+2}{2^m}\right)}{2}.
$$

The inductive hypothesis implies that
\begin{multline}\label{eq:before holder}
\frac{K^p}{2^p}\left(E_{m-1}(h)-E_0(h)\right)\\\ge
\frac{\max\left\{\left\|h\left(\frac{j}{2^{m-1}}\right)-L_h\left(\frac{j}{2^{m-1}}\right)\right\|_Y^p
,\left\|h\left(\frac{j+1}{2^{m-1}}\right)-L_h\left(\frac{j+1}{2^{m-1}}\right)\right\|_Y^p\right\}}
{\left(3-2^{-(m-2)}\right)^{p-1}}.
\end{multline}
Since $L_h$ is affine, by convexity of $\|\cdot\|_Y^p$ we have
\begin{eqnarray}\label{eq:max on edges}
&&\!\!\!\!\!\!\!\!\!\!\!\!\!\!\nonumber
\left\|\frac{h\left(\frac{j}{2^{m-1}}\right)+h\left(\frac{j+1}{2^{m-1}}\right)}{2}
-L_h\left(\frac{k}{2^m}\right)\right\|_Y^p=
\left\|\frac{h\left(\frac{j}{2^{m-1}}\right)
-L_h\left(\frac{j}{2^{m-1}}\right)+h\left(\frac{j+1}{2^{m-1}}\right)-L_h\left(\frac{j+1}{2^{m-1}}\right)}
{2}\right\|_Y^p\\\nonumber
&\le& \frac{\left\|h\left(\frac{j}{2^{m-1}}\right)-L_h\left(\frac{j}{2^{m-1}}\right)\right\|_Y^p
+\left\|h\left(\frac{j+1}{2^{m-1}}\right)-L_h\left(\frac{j+1}{2^{m-1}}\right)\right\|_Y^p}{2}\\ &\le& \max\left\{\left\|h\left(\frac{j}{2^{m-1}}\right)-L_h\left(\frac{j}{2^{m-1}}\right)\right\|_Y^p
,\left\|h\left(\frac{j+1}{2^{m-1}}\right)-L_h\left(\frac{j+1}{2^{m-1}}\right)\right\|_Y^p\right\}.
\end{eqnarray}
Therefore, using~\eqref{eq:use convexity}, \eqref{eq:before holder} and \eqref{eq:max on edges}, we have
\begin{eqnarray}
&&\!\!\!\!\!\!\!\!\!\!\!\!\!\!\!\!\!\!\!\nonumber
\frac{K^p}{2^p}\left(E_m(h)-E_0(h)\right)\\&\ge&\frac{\left\|\frac{h\left(\frac{j}{2^{m-1}}\right)
+h\left(\frac{j+1}{2^{m-1}}\right)}{2}
-L_h\left(\frac{k}{2^m}\right)\right\|_Y^p}
{\left(3-2^{-(m-2)}\right)^{p-1}} +2^{(m-1)(p-1)}\left\|\frac{h\left(\frac{j}{2^{m-1}}\right)+h\left(\frac{j+1}{2^{m-1}}\right)}{2}
-h\left(\frac{k}{2^m}\right)\right\|_Y^p\nonumber\\\label{eq:holder}
&\ge& \frac{\left(\left\|\frac{h\left(\frac{j}{2^{m-1}}\right)+h\left(\frac{j+1}{2^{m-1}}\right)}{2}
-L_h\left(\frac{k}{2^m}\right)\right\|_Y
+\left\|\frac{h\left(\frac{j}{2^{m-1}}\right)+h\left(\frac{j+1}{2^{m-1}}\right)}{2}
-h\left(\frac{k}{2^m}\right)\right\|_Y\right)^p}{\left(3-2^{-(m-2)}+2^{-(m-1)}
\right)^{p-1}}\\\label{eq:done 1 dim}
&\ge& \frac{\left\|h\left(\frac{k}{2^m}\right)-L_h\left(\frac{k}{2^m}\right)\right\|_Y^p}{\left(3-2^{-(m-1)}\right)^{p-1}},
\end{eqnarray}
where in~\eqref{eq:holder} we used the inequality
$$
\forall \alpha,\beta,u,v\in (0,\infty),\quad \frac{u^p}{\alpha^{p-1}}+\frac{v^p}{\beta^{p-1}}\ge \frac{(u+v)^p}{(\alpha+\beta)^{p-1}},
$$
which is an immediate consequence of H\"older's inequality. Since inequality~\eqref{eq:done 1 dim} is the same as the desired inequality~\eqref{eq:better inductive}, the proof of Lemma~\ref{lem:use UC} is complete.
\end{proof}

\begin{proof}[Proof of Theorem~\ref{thm:main intro} when $n=1$] Our goal is to show that if $(Y,\|\cdot\|_Y)$ is a Banach space satisfying~\eqref{eq:4 point} then for every $\e\in (0,\frac12)$ we have
\begin{equation}\label{eq:R goal}
r^{\R\to Y}(\e) \ge \left(\frac{\e}{8}\right)^{(8K/\e)^p}.
\end{equation}
The fact that~\eqref{eq:R goal} is better than the desired estimate~\eqref{eq:our estimate r} is a simple elementary inequality (recall that $p\ge 2$, $K\ge 1$ and $0<\e<1/2$).

Assume for contradiction that~\eqref{eq:R goal} fails. Then there exists $\e\in (0,\frac12)$ and a $1$-Lipschitz function $h:[-1,1]\to Y$ such that for every $-1\le a<b\le 1$ with $b-a\ge (\e/8)^{(8K/\e)^p}$  there exists $t\in [a,b]$ satisfying $\|h(t)-L_h^{a,b}(t)\|_Y>\e(b-a)/2$. Choose $m\in \N$ such that $\e/8\le 2^{-m}<\e/4$ and take $k\in \{1,\ldots,m\}$ such that if we set $s=a+k2^{-m}(b-a)$ then $|s-t|\le (b-a)/2^{m+1}$. Because $f$ is $1$-Lipschitz, it follows immediately from the definition~\eqref{eq:def linear interpolation} of $L_h^{a,b}$  that it is also $1$-Lipschitz. Hence,
\begin{multline*}
\max_{k\in \{0,\ldots, 2^m\}} \frac{\left\|h\left(a+\frac{k}{2^m}(b-a)\right)-L_h^{a,b}\left(a+\frac{k}{2^m}(b-a)\right)\right\|_Y^p}
{(b-a)^p}\ge
\frac{\|h(s)-L_h^{a,b}(s)\|_Y^p}{(b-a)^p}
\\\ge \frac{\left(\|h(t)-L_h^{a,b}(t)\|_Y-\|h(t)-h(s)\|_Y-\|L_h^{a,b}(t)-L_h^{a,b}(s)\|_Y\right)^p}{(b-a)^p}
\ge \left(\frac{\e}{2}-\frac{1}{2^m}\right)^p\ge \frac{\e^p}{4^p}.
\end{multline*}
 Consequently, it follows from Lemma~\ref{lem:use UC} that
 \begin{equation}\label{eq:set up iteration}
-1\le a<b\le 1\ \wedge\ b-a\ge (\e/8)^{(8K/\e)^p}\implies E_m^{a,b}(h)\ge \frac{\|h(b)-h(a)\|_Y^p}{(b-a)^p}+\left(\frac{\e}{8K}\right)^p.
 \end{equation}

For $k\in \N\cup\{0\}$ and $j\in \{0,\ldots,2^{km}\}$ denote $a_j^k=-1+j/2^{km-1}$. If  $1/2^{km-1}\ge (\e/8)^{(8K/\e)^p}$ then it follows from~\eqref{eq:set up iteration} that for every $j\in \{0,\ldots,2^{km}-1\}$ we have
\begin{equation}\label{eq:on each interval}
E_m^{a_j^k,a_{j+1}^k}(h)\ge \frac{\left\|h\left(a_{j+1}^k\right)-h\left(a_{j}^k\right)\right\|_Y^p}{2^{(km-1)p}}+\left(\frac{\e}{8K}\right)^p.
\end{equation}
Hence,
\begin{multline}\label{eq:for summing}
E_{(k+1)m}^{-1,1}(h)\stackrel{\eqref{eq:def Eab}}
{=}2^{-km}\sum_{j=0}^{2^{km}-1}E_m^{a_j^k,a_{j+1}^k}(h)\\\stackrel{\eqref{eq:on each interval}}{\ge} 2^{-km} \sum_{j=0}^{2^{km}-1}\frac{\left\|h\left(a_{j+1}^k\right)-h\left(a_{j}^k\right)\right\|_Y^p}{2^{(km-1)p}}+
\left(\frac{\e}{8K}\right)^p\stackrel{\eqref{eq:def Eab}}
{=}E_{km}^{-1,1}(h)+\left(\frac{\e}{8K}\right)^p.
\end{multline}
Since $h$ is $1$-Lipschitz, the definition~\eqref{eq:def Eab} implies that $E_j^{a,b}(h)\le 1$ for all $-1\le a<b\le 1$ and $j\in \N$.  Denote $M=\left\lfloor \left(1+(8K/\e)^p\log_2(8/\e)\right)/m\right\rfloor$. Then~\eqref{eq:for summing} holds for every $k\in \N\cap [0,M]$. It follows that $E_{(M+1)m}^{-1,1}(h)\ge E_0^{-1,1}(h)+(M+1)(\e/(2K))^p\ge (M+1)(\e/(2K))^p$. Observe that the definition of $M$, combined with $2^{-m}\ge \e/8$, implies that $(M+1)(\e/(2K))^p>1$. Thus $E_{(M+1)m}^{-1,1}(h)>1$, a contradiction.
\end{proof}

\section{Proof of Theorem~\ref{thm:main intro} when $n\ge 2$}\label{sec:n>1}

Fix $n\in \N$ and let $(X,\|\cdot\|_X)$ be an $n$-dimensional normed space. Assume also that $(Y,\|\cdot\|_Y)$ is a Banach space and $f:X\to Y$. By John's theorem~\cite{Jo} there exists a norm $\|\cdot \|_2$ on $X$ which is Hilbertian and satisfies $\|x\|_2\le \|x\|_X\le \sqrt{n}\|x\|_2$ for all $x \in X$.  Let $\{e_1,\ldots,e_n\}$ be an orthonormal basis with respect to $\|x\|_2$. Via the obvious identifications, we may assume below that $X=\R^n$ and $\{e_1,\ldots,e_n\}$ is the standard coordinate basis.

For $y\in \R^n$ and $j\in \{1,\ldots, n\}$ define $f_j^y:\R\to Y$ by
$f_j^y(t)=f(y+te_j)$. Also, given $m\in \N$ and $j\in
\{1,\ldots,n\}$ set $F_j^m= \left\{z\in
\frac{1}{2^m}\{0,\ldots,2^m\}^n:\ z_j=0\right\}$. For $x\in \R^n$
and $\t\in (0,\infty)$ consider the following quantity
\begin{multline}\label{eq:def D}
\mathscr{D}_\t^m(f)(x)\eqdef\max_{\substack{j\in\{1,\ldots,n\}\\
y\in x+ \t F_j^m\\k\in
\{0,\ldots,2^m\}}}\frac{\left\|f\left(y+\frac{k\t}{2^{m}}e_j\right)-
f(y)-\frac{k}{2^{m}}\left(f\left(y+\t e_j\right)-f(y)\right)\right\|_X}{\t}\\
\stackrel{\eqref{eq:def linear
interpolation}}{=}\max_{\substack{j\in\{1,\ldots,n\}\\ y\in x+ \t
F_j^m\\k\in
\{0,\ldots,2^m\}}}\frac{1}{\t}\left\|f_j^y\left(\frac{k}{2^m}\t\right)-L_{f_j^y}^{0,\t}
\left(\frac{k}{2^m}\t\right) \right\|_Y.
\end{multline}

\begin{lemma}\label{lem:walsh}
Fix $x\in \R^n$, $m\in \N$ and $\e,\t\in (0,\infty)$ with $2^m\ge
2/\e\ge 10n^2$. Suppose that $f:\R^n\to Y$ satisfies
$\|f(y)-f(z)\|\le \|y-z\|_2$ for all $y,z\in x+[0,\t]^n$, i.e., $f$
is $1$-Lipschitz with respect to the Euclidean metric on the cube
$x+[0,\t]^n$. Suppose also that $\mathscr{D}_\t^m(f)(x)\le \e$. Then
there exists an affine mapping $A:\R^n\to Y$ such that
\begin{equation}\label{eq:approx linear part}
\sup_{z\in x+[0,\sqrt{\e}\t]^n}\left\|f(z)-A(z)\right\|_Y\le
8n^2\e\t.
\end{equation}
\end{lemma}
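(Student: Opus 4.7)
The plan is to take the natural first-order affine approximation
\begin{equation*}
A(z)\eqdef f(x)+\sum_{j=1}^n \frac{z_j-x_j}{\tau}\bigl(f(x+\tau e_j)-f(x)\bigr)
\end{equation*}
and estimate $\|f(z)-A(z)\|_Y$ directly on $x+[0,\sqrt{\e}\tau]^n$. Since $\mathscr{D}_\tau^m(f)(x)$ only supplies information along grid segments of full length $\tau$ on the ambient cube, the first step is to discretize: round $z$ to the nearest grid point $\hat z\in x+(\tau/2^m)\{0,\ldots,2^m\}^n$ (flooring coordinatewise). Then $\|\hat z-z\|_\infty\le \tau/2^m$, so by the Euclidean Lipschitz hypothesis $\|f(z)-f(\hat z)\|_Y\le \sqrt{n}\,\tau/2^m$, and directly from the definition $\|A(z)-A(\hat z)\|_Y\le n\tau/2^m$; using $2^m\ge 2/\e$ both quantities are at most $n\e\tau$. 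So it suffices to prove $\|f(\hat z)-A(\hat z)\|_Y\lesssim n^{3/2}\e\tau$.

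Write $\hat z-x=\sum_{j=1}^n c_j\tau e_j$ with $c_j=k_j/2^m\in\{0,1/2^m,\ldots,1\}$ and $c_j\le \sqrt{\e}$, and set $w_j=x+\sum_{i\le j} c_i\tau e_i$, so $w_0=x$, $w_n=\hat z$, and the polygonal path $w_0\to w_1\to\cdots\to w_n$ stays in $x+[0,\sqrt{\e}\tau]^n$. The crucial observation is that $w_{j-1}\in x+\tau F_j^m$: its $j$-th coordinate equals $x_j$, its coordinates $i<j$ are $x_i+c_i\tau$ with $c_i\in\{0,1/2^m,\ldots,1\}$, and its coordinates $i>j$ equal $x_i$. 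Applying the hypothesis $\mathscr{D}_\tau^m(f)(x)\le\e$ at base point $w_{j-1}$, direction $e_j$, and index $k_j$ (which is legitimate since $k_j/2^m=c_j\le\sqrt{\e}\le 1$) gives
\begin{equation*}
\bigl\|f(w_j)-f(w_{j-1})-c_j\bigl(f(w_{j-1}+\tau e_j)-f(w_{j-1})\bigr)\bigr\|_Y\le \e\tau.
\end{equation*}
Telescoping over $j=1,\ldots,n$ then yields
\begin{equation*}
\Bigl\|f(\hat z)-f(x)-\sum_{j=1}^n c_j\bigl(f(w_{j-1}+\tau e_j)-f(w_{j-1})\bigr)\Bigr\|_Y\le n\e\tau.
\end{equation*}

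It remains to compare $f(w_{j-1}+\tau e_j)-f(w_{j-1})$ with the ``canonical'' increment $u_j\eqdef f(x+\tau e_j)-f(x)$ that appears in $A$. Let $\Delta_j$ denote their difference. Since $w_{j-1}$ and $x$ agree in coordinates $j,j+1,\ldots,n$ and differ in each of the first $j-1$ by at most $\sqrt{\e}\tau$, we have $\|w_{j-1}-x\|_2\le\sqrt{n\e}\,\tau$, and translating by $\tau e_j$ does not change Euclidean distances; so the Lipschitz hypothesis forces $\|\Delta_j\|_Y\le 2\sqrt{n\e}\,\tau$. Consequently
\begin{equation*}
\Bigl\|\sum_{j=1}^n c_j\Delta_j\Bigr\|_Y\le n\cdot \sqrt{\e}\cdot 2\sqrt{n\e}\,\tau=2n^{3/2}\e\tau,
\end{equation*}
and combining with the telescoped bound and the two discretization errors gives $\|f(z)-A(z)\|_Y\le (n+2n^{3/2}+\tfrac{3n}{2})\e\tau$, which is well within $8n^2\e\tau$ for $n\ge 2$.

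There is no real obstacle beyond setting up the discretization so that $\mathscr{D}_\tau^m$ applies on every leg of the coordinate walk; the key trick is that the smallness of the sub-cube ($\sqrt{\e}\tau$) provides a free $\sqrt{\e}$ factor via the coefficients $c_j\le\sqrt{\e}$, which is exactly what converts the crude Lipschitz estimate $\|\Delta_j\|_Y\le 2\sqrt{n\e}\,\tau$ into an $\e$-level bound. The hypothesis $2^m\ge 2/\e\ge 10n^2$ is used only to make the two rounding errors subdominant and to ensure $\sqrt{\e}\le 1$ so the walk respects the grid.
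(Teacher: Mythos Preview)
Your argument is correct, and it is genuinely different from (and more direct than) the paper's proof.

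The paper proceeds by first proving a stronger intermediate statement: it shows by induction on $n$ that on the full discrete cube $\frac{1}{2^m}\{0,\ldots,2^m\}^n$ the function $f$ is within $n\e$ of a multilinear (Walsh) expansion $g(y)=\sum_{S\subseteq\{1,\ldots,n\}} W_S(y)v_S$, with coefficient bounds $\|v_S\|_Y\le 2^{|S|-1}$. The affine approximation $A(z)=v_\emptyset+\sum_i z_i v_{\{i\}}$ then arises by discarding the higher-order Walsh terms, which on $[0,\sqrt{\e}]^n$ contribute at most $\sum_{k\ge 2}\binom{n}{k}(2\sqrt{\e})^k 2^{k-1}\lesssim n^2\e$. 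You bypass the Walsh machinery entirely: your coordinate walk $w_0\to w_1\to\cdots\to w_n$ applies $\mathscr{D}_\tau^m(f)(x)\le\e$ directly on each leg (the check $w_{j-1}\in x+\tau F_j^m$ is clean), and the key extra factor of $\sqrt{\e}$ is recovered not from the Walsh monomials but from the inequality $c_j\le\sqrt{\e}$ multiplying the crude Lipschitz bound $\|\Delta_j\|_Y\le 2\sqrt{n\e}\,\tau$. This yields the sharper constant $O(n^{3/2})$ rather than the paper's $O(n^2)$, and avoids the induction and the numerical estimate $(1+s)^n-1-ns\le 2n^2 s^2$. The paper's route has the advantage of producing the global multilinear approximation on the full grid, which is conceptually appealing even though only the affine part on the small sub-cube is used downstream; your route gets straight to what is needed.
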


\begin{proof} By translation and rescaling we may assume without loss of generality that $x=0$ and $\t=1$. We will prove by induction on $n$ that there exist vectors $\{v_{S}\}_{S\subseteq \{1,\ldots,n\}}\subseteq Y$ with \begin{equation}\label{eq:coefficient bound}
v_\emptyset =f(0)\quad \mathrm{and}\quad \forall\  \emptyset\neq S \subseteq \{1,\ldots,n\},\quad \|v_S\|_Y\le 2^{|S|-1},
\end{equation}
such that for every $y\in \frac{1}{2^{m}}\{0,\ldots,2^m\}^n$ we have
\begin{equation}\label{eq:approx walsh}
\left\|f(y)-\sum_{S\subseteq \{1,\ldots,n\}}W_S(y)v_S\right\|_Y\le \e n,
\end{equation}
where the Walsh functions $\{W_S:\R^n\to \R\}_{S\subseteq \{1,\ldots,n\}}$ are defined as usual by $ W_S(y)\eqdef \prod_{i\in S} y_i$.

Assuming for the moment that this assertion has been proven, we proceed to deduce~\eqref{eq:approx linear part}.  Define $A:\R^n\to Y$ by $A(z)=v_\emptyset+\sum_{i=1}^n z_iv_{\{i\}}$. For $z\in [0,1]^n$ choose $y\in \frac{1}{2^m}\{0,\ldots,2^m-1\}^n$ with $|z_i-y_i|\le 1/2^{m+1}$ for all $i\in \{1,\ldots,n\}$. If we assume in addition that $z\in [0,\sqrt{\e}]^n$ then also $0\le y_i\le \frac{1}{2^{m+1}}+\sqrt{\e}$ for all $i\in \{1,\ldots,n\}$. Setting $g(y)=\sum_{S\subseteq \{1,\ldots,n\}}W_S(y)v_S$, we have
\begin{eqnarray}
&&\nonumber\!\!\!\!\!\!\!\!\!\!\!\!\!\|f(z)-A(z)\|_Y\\&\le&\nonumber \|f(z)-f(y)\|_Y+\left\|f(y)-g(y)\right\|_Y
+
\sum_{\substack{S\subseteq \{1,\ldots,n\}\\|S|\ge 2}}W_S(y)\left\|v_S\right\|_Y+\sum_{i=1}^n |z_i-y_i|\cdot\left\|v_{\{i\}}\right\|_Y\\ \label{eq:explain approx}
&\le& \frac{\sqrt{n}}{2^{m+1}}+n\e+\sum_{k=2}^n \binom{n}{k}\left(\sqrt{\e}+\frac{1}{2^{m+1}}\right)^k2^{k-1}+
\frac{n}{2^{m+1}}\\
&=&\nonumber \frac{\sqrt{n}+n}{2^{m+1}}+n\e+\frac12\left(\left(1+2\sqrt{\e}+\frac{1}{2^m}
\right)^n-1-2n\sqrt{\e}-\frac{n}{2^m}\right)\\
\label{eq:explain assumption+monotone}
&\le &3n\e+\frac{\left(1+\sqrt{5\e}\right)^n-1-n\sqrt{5\e}}{2}\\&\le&\label{eq:second order bound} 8n^2\e,
\end{eqnarray}
where in~\eqref{eq:explain approx} we used the fact that $f$ is $1$-Lipschitz and $\|y-z\|_2\le \sqrt{n}\|y-z\|_\infty\le \frac{\sqrt{n}}{2^{m+1}}$, the estimates~\eqref{eq:coefficient bound}, \eqref{eq:approx walsh}, and the above bounds on $\|y-z\|_\infty$ and $\|y\|_\infty$. In~\eqref{eq:explain assumption+monotone} we used our assumption $2^m\ge 2/\e\ge 10n^2$, which directly implies that $2\sqrt{\e}+2^{-m}\le \sqrt{5\e}\le 1/n$, together with the fact that the mapping $s\mapsto (1+s)^n-1-ns$ is increasing on $(0,\infty)$. In~\eqref{eq:second order bound} we used the elementary inequality $(1+s)^n-1-ns\le 2n^2s^2$, which is valid when $s\in (0,1/n)$.

It remains to prove~\eqref{eq:coefficient bound} and~\eqref{eq:approx walsh}, which will be done by induction on $n$. For $n=1$ set
$v_\emptyset= f(0)$ and  $v_{\{1\}}= f\left(1\right)-f(0)$.
 Since $f$ is $1$-Lipschitz we know that $\|v_{\{1\}}\|_Y\le 1$, proving~\eqref{eq:coefficient bound}. For the above choices of $v_\emptyset,v_{\{1\}}$, the estimate~\eqref{eq:approx walsh} is the same as the assumption $\mathscr{D}_1^m(f)(x)\le \e$ (recall that $\t=1$).

 If $n>1$  apply the inductive hypothesis to the functions $f_0,f_1:\R^{n-1}\to Y$ given by $f_0(y_1,\ldots,y_{n-1})=f(y_1,\ldots,y_{n-1},0)$ and $f_1(y_1,\ldots,y_{n-1})=f(y_1,\ldots,y_{n-1},1)$. One obtains $\{v^0_{S}\}_{S\subseteq \{1,\ldots,n-1\}}, \{v^1_{S}\}_{S\subseteq \{1,\ldots,n-1\}}\subseteq Y$ satisfying $v^0_\emptyset=f(0)$, $v^1_\emptyset=f(e_n)$, for all nonempty $S\subseteq \{1,\ldots,n-1\}$ we have $\|v^0_S\|_Y,\|v_S^1\|_Y\le 2^{|S|-1}$,  and if we define  $g_0,g_1:\R^{n-1}\to Y$ by
$$
g_i(y)\eqdef\sum_{S\subseteq \{1,\ldots,n-1\}}W_S(y)v^i_S,
$$
then
\begin{equation}\label{eq:inductive approx}
\max\left\{\|g_0(y)-f_0(y)\|_Y,\|g_1(y)-f_1(y)\|_Y\right\}\le \e(n-1)
 \end{equation}
 for all $y\in \frac{1}{2^m}\{0,\ldots,2^m\}^{n-1}$. For $S\subseteq \{1,\ldots,n\}$ define
\begin{equation}\label{eq:def new coef}
v_S\eqdef \left\{\begin{array}{ll}v_S^0& \mathrm{if\ }n\notin S,\\
v_{S\setminus\{n\}}^1-v_{S\setminus\{n\}}^0&\mathrm{if\ } n\in S.\end{array}\right.
\end{equation}
So, $v_\emptyset=v_\emptyset^0=f(0)$. If $S\neq\emptyset$ and $n\notin S$ then  have $\|v_S\|_Y=\|v_S^0\|_Y\le 2^{|S|-1}$. If $n\in S$ and $S\setminus\{n\}\neq\emptyset$ then
$
\|v_{S}\|_Y\le \|v_{S\setminus\{n\}}^0\|_Y+\|v^1_{S\setminus\{n\}}\|_Y\le
2^{|S|-2} =2^{|S|-1}.
$
Finally, since $f$ is $1$-Lipschitz we have
$
\|v_{\{n\}}\|_Y=\|v^1_\emptyset-v_\emptyset^0\|_Y=\|f( e_n)-f(0)\|_Y\le 1.
$
This completes the proof of~\eqref{eq:coefficient bound}. To prove~\eqref{eq:approx walsh} define for  $y\in \R^n$,
\begin{equation}\label{eq:decompose next g}
g(y)\eqdef\sum_{S\subseteq \{1,\ldots,n\}}W_S(y)v_S\stackrel{\eqref{eq:def new coef}}{=}\left(1-y_n\right)g_0(y_1,\ldots,y_{n-1})+y_ng_1(y_1,\ldots,y_{n-1}).
\end{equation}
The assumption $\mathscr{D}_1^m(f)(x)\le \e$ implies that for all $y\in \frac{1}{2^m}\{0,\ldots,2^m\}^{n-1}$ and all $k\in \{0,\ldots,2^m\}$ we have
\begin{equation}\label{eq:approx fiber}
\left\|f\left(y_1,\ldots,y_{n-1},\frac{k}{2^m}\right)-
\left(1-\frac{k}{2^m}\right)f_0(y)-\frac{k}{2^m}f_1(y)\right\|_Y\le \e.
\end{equation}
Hence,
\begin{multline}\label{eq:to say done}
\left\|f\left(y_1,\ldots,y_{n-1},\frac{k}{2^m}\right)-
g\left(y_1,\ldots,y_{n-1},\frac{k}{2^m}\right)\right\|_Y\\
\stackrel{\eqref{eq:decompose next g}\wedge\eqref{eq:approx fiber}}{\le} \e+\left(1-\frac{k}{2^m}\right)\left\|f_0(y)-g_0(y)\right\|_Y+\frac{k}{2^m}
\left\|f_1(y)-g_1(y)\right\|_Y\stackrel{\eqref{eq:inductive approx}}{\le} \e n.
\end{multline}
Since~\eqref{eq:to say done} holds for all $y\in \frac{1}{2^m}\{0,\ldots,2^m\}^{n-1}$ and all $k\in \{0,\ldots,2^m\}$, the proof of~\eqref{eq:approx walsh} is complete.
\end{proof}

\begin{proof}[Proof of Theorem~\ref{thm:main intro} when $n\ge 2$]
Our goal is to show that if $(Y,\|\cdot\|_Y)$ is a Banach space
satisfying~\eqref{eq:4 point} then for every $\e\in (0,\frac12)$ we
have
\begin{equation}\label{eq:R goaln}
r^{X\to Y}(\e) \ge R\eqdef \e^{K^pn^{20(n+p)}/\e^{2p+2n-2}}.
\end{equation}
Assume for contradiction that~\eqref{eq:R goaln} fails. Then there
exists $\e\in (0,\frac12)$ and a $\frac{1}{\sqrt{n}}$-Lipschitz
function $f:B_X\to Y$ such that for all $\rho\ge R$ and $y\in X$
such that $y+\rho B_X\subseteq B_X$, if $A:X\to Y$ is affine then
\begin{equation}\label{eq:contrapositive UAAP}
\sup_{z\in y+\rho B_X}\frac{\|f(z)-A(z)\|_Y}{\rho}>
\frac{\e}{\sqrt{n}}.
\end{equation}
 We claim that this implies the following statement.
\begin{equation}\label{eq:D big}
x\in \frac12B_X\ \wedge\
\t\in\left[\frac{32n^{5/2}}{\e}R,\frac{1}{2n}\right]\ \wedge\ 2^m\in
\left[ \frac{512n^5}{\e^2},\infty\right)\cap \N\implies
\mathscr{D}_\t^m(f)(x)>\frac{\e^2}{256 n^5}.
\end{equation}
Indeed, note that, because $\|\cdot\|\le \sqrt{n}\|\cdot \|_2\le
n\|\cdot\|_\infty$, the assumptions in~\eqref{eq:D big} imply
that $x+[0,\t]^n\subseteq  B_X$. Since $f$ is
$\frac{1}{\sqrt{n}}$-Lipschitz, it is $1$-Lipschitz with respect to
the Euclidean norm. If $\mathscr{D}_\t^m(f)(x)\le \e^2/(256 n^5)$
then it would follow from Lemma~\ref{lem:walsh} that there exists an
affine mapping $A:X\to Y$ such that
\begin{equation}\label{eq:sup on cube}
\sup_{z\in
x+\left[0,\e\t/(16n^{5/2})\right]}\frac{\left\|f(z)-A(z)\right\|_Y}{\e\t/(16n^{5/2})}\le
8n^2\sqrt{\frac{\e^2}{256 n^5}}=\frac{\e}{2\sqrt{n}}.
\end{equation}
Because $\|\cdot\|_X\ge \|\cdot \|_2$, we have $[-1,1]^n\supseteq
B_X$. Setting $\rho= \e\t/(32n^{5/2})\ge R$, it follows that
$x+\left[0,\e\t/(16n^{5/2})\right]^n\supseteq y+ \rho B_X$ for some
$y\in X$ with $y+\rho B_X\subseteq B_X$. Hence~\eqref{eq:sup on
cube} contradicts~\eqref{eq:contrapositive UAAP}, completing the
verification of~\eqref{eq:D big}.

It remains to argue that~\eqref{eq:D big} leads to a contradiction.
To this end, consider the following quantity, defined for every
$x\in \frac12 B_X$, $m,k\in \N\cup \{0\}$ and $\t\in (0,1/(2n)]$.
\begin{equation}\label{eq:two parameter H}
H^\t_{m,k}(f)(x)\eqdef\frac{1}{2^{m(n-1)}}\sum_{j=1}^n\sum_{\substack{y\in
\{0,\ldots,2^{m}-1\}^n\\y_j=0}} E_k^{0,\t}\left(f_j^{x+\t
2^{-m}y}\right).
\end{equation}
In~\eqref{eq:two parameter H}, recall the notation
$f_j^u(t)=f(u+te_j)$ and the definition~\eqref{eq:def Eab}. One checks directly from the definition~\eqref{eq:two parameter H} that the following recursive relation holds true. If $\alpha,\beta,\gamma\in \N\cup \{0\}$ and $\alpha\ge \beta $ then for every $\t\in (0,1/(2n)]$,
\begin{equation}\label{eq:recursive idenity}
H^\t_{\alpha,\beta+\gamma}(f)(0)=\frac{1}{2^{\beta n}}\sum_{x\in
\{0,\ldots,2^{\beta}-1\}^n}H^{\t/2^\beta}_{\alpha-\beta,\gamma}(f)\left(\frac{\t}{2^\beta}x\right).
\end{equation}

Observe that the fact that $f$ is
$\frac{1}{\sqrt{n}}$-Lipschitz and $\|e_j\|_X\le
\sqrt{n}\|e_j\|_2=\sqrt{n}$ implies that in each of the summands
in~\eqref{eq:two parameter H} the function $f_j^{x+\t
2^{-m}y}:[0,\t]\to Y$ is $1$-Lipschitz. Therefore we have the
point-wise bound $E_k^{0,\t}\left(f_j^{x+\t 2^{-m}y}\right)\le 1$
for each summand in~\eqref{eq:two parameter H}, implying that
\begin{equation}\label{eq:H a priori bound}
H^\t_{m,k}(f)(x)\le n.
\end{equation}

Set
\begin{equation}\label{eq:def m}
m\eqdef\left\lceil \log_2\left(\frac{512n^5}{\e^2}\right)\right\rceil,
\end{equation}
and
\begin{equation}\label{eq:def M high dim}
M\eqdef \left\lfloor
\frac{1}{m}\log_2\left(\frac{\e}{64n^{7/2}R}\right)\right\rfloor.
\end{equation}
Fix also an integer $k\in [0,M]$ and set
\begin{equation}\label{eq:def theta}
\t\eqdef\frac{1}{2^{km+1}n}.
\end{equation}
Then $\t\ge 32 n^{5/2}R/\e$ (recall~\eqref{eq:R goaln},
\eqref{eq:def m}, \eqref{eq:def M high dim}). It follows
from~\eqref{eq:D big} that $\mathscr{D}_{\t}^m(f)(x)\ge
\e^2/(2^8n^5)$. By the definition~\eqref{eq:def D}, this means that
there exists $j\in \{1,\ldots,n\}$ and $w\in x+\t F_j^m$ (recall
that $F_j^m= \left\{z\in \frac{1}{2^m}\{0,\ldots,2^m\}^n:\
z_j=0\right\}$), such that for some $s\in \{0,\ldots,2^m\}$ we have
\begin{equation}\label{eq:bad line}
\left\|f\left(w+\frac{s\t}{2^{m}}e_j\right)-f(w)-\frac{s}{2^m}
\left(f\left(w+\t e_j\right)-f(w)\right)\right\|_Y\ge
\frac{\e^2}{2^9n^62^{km}}.
\end{equation}

Denote $\ell=(M+1-k)m$ and consider the set
\begin{equation}\label{eq:def C}
C\eqdef\left\{y\in\left\{0,\ldots,2^{\ell}-1\right\}:\ y_j=0\
\wedge\ \left\|y-\frac{2^{\ell}}{\t}(w-x)\right\|_\infty\le
\frac{\e^22^{\ell}}{2^{10}n^{11/2}}\right\}.
\end{equation}
 Then
\begin{equation}\label{eq:C size}
|C|\ge \left\lfloor\frac{\e^22^\ell}{2^{10}n^{11/2}}\right\rfloor^{n-1}\\
\ge\left(\frac{\e^2}{2^{11}n^{11/2}}\right)^{n-1}2^{\ell(n-1)}.
\end{equation}
 Since the Lipschitz constant of $f$ with respect to
the $\ell_\infty$ norm is at most $\sqrt{n}$, it follows
from~\eqref{eq:bad line} that for every $y\in C$ we have
\begin{multline}\label{eq:nearby lines}
\left\|f\left(x+\frac{\t}{2^{\ell}}y+\frac{s\t}{2^{m}}e_j\right)-
f\left(x+\frac{\t}{2^{\ell}}y\right) -\frac{s}{2^m}
\left(f\left(x+\frac{\t}{2^{\ell}}y+\t e_j\right)-
f\left(x+\frac{\t}{2^{\ell}}y\right)\right)\right\|_Y\\\stackrel{\eqref{eq:def
C}}{\ge}
\frac{\e^2}{2^9n^62^{km}}-2\sqrt{n}\cdot\frac{\t}{2^\ell}\cdot
\frac{\e^22^{\ell}}{2^{10}n^{11/2}} \stackrel{\eqref{eq:def
theta}}{=} \frac{\e^2}{2^{10}n^62^{km}}.
\end{multline}
An equivalent way to write~\eqref{eq:nearby lines} is as follows.
\begin{equation*}\label{eq:equivalent nearby}
\left\|f_j^{x+\t 2^{-\ell}y}\left(\frac{s}{2^m}\right)-L_{f_j^{x+\t
2^{-\ell}y}}^{0,\t} \left(\frac{s}{2^m}\right)\right\|_Y\ge
\frac{\e^2}{2^{10}n^62^{km}}.
\end{equation*}
An application of Lemma~\ref{lem:use UC} now implies that for every
$y\in C$ we have
\begin{equation}\label{eq:lower in C}
\forall\ y\in C,\quad E_m^{0,\t}\left(f_j^{x+\t2^{-\ell}y}\right)\ge
\frac{\left\|f_j^{x+\t2^{-\ell}y}\left(\t\right)-f_j^{x+\t2^{-\ell}y}(0)\right\|_Y^p}{\t^p}+
\left(\frac{\e^2}{K(4n)^5}\right)^p,
\end{equation}
where $K$ is the constant in~\eqref{eq:4 point}. Also, by convexity (see~\eqref{eq:monotone}), for every $i\in \{1,\ldots, n\}$ we have
\begin{equation}\label{eq:lower convexity all y}
y\in \{0,\ldots,2^{\ell}\}\  \wedge\ y_i=0\implies E_m^{0,\t}\left(f_i^{x+\t2^{-\ell}y}\right)\ge
\frac{\left\|f_i^{x+\t2^{-\ell}y}\left(\t\right)-f_i^{x+\t2^{-\ell}y}(0)\right\|_Y^p}{\t^p}.
\end{equation}
Hence,
\begin{eqnarray}\label{eq:for recursion multi dim}
&&\nonumber\!\!\!\!\!\!\!\!\!\!\!\!\!\!\!\!\!\!\!\!\!H^\t_{\ell,m}(f)(x)\stackrel{\eqref{eq:two
parameter H}}{=}\frac{1}{2^{\ell(n-1)}}\sum_{y\in
C}E_m^{0,\t}\left(f_j^{x+\t
2^{-\ell}y}\right)+\frac{1}{2^{\ell(n-1)}} \sum_{\substack{y\in
\{0,\ldots,2^{\ell}-1\}^n\\y_j=0\\y\notin C}}
E_m^{0,\t}\left(f_j^{x+\t
2^{-\ell}y}\right)\\\nonumber&&+\frac{1}{2^{\ell(n-1)}}\sum_{\substack{i\in
\{1,\ldots,n\}\\i\neq j}}\sum_{\substack{y\in
\{0,\ldots,2^{\ell}-1\}^n\\y_i=0}} E_m^{0,\t}\left(f_i^{x+\t
2^{-\ell}y}\right)\\
&&\!\!\!\!\!\!\!\!\!\!\!\!\!\!\!\!\!\!\!\!\nonumber\stackrel{\eqref{eq:lower
in C}\wedge\eqref{eq:lower convexity all y}}{\ge}
\frac{1}{2^{\ell(n-1)}} \sum_{i=1}^n\sum_{\substack{y\in
\{0,\ldots,2^{\ell}-1\}^n\\y_i=0}}
\frac{\left\|f_i^{x+\t2^{-\ell}y}\left(\t\right)
-f_i^{x+\t2^{-\ell}y}(0)\right\|_Y^p}{\t^p}+
\frac{|C|}{2^{\ell(n-1)}}
\left(\frac{\e^2}{K(4n)^5}\right)^p\\
&&\!\!\!\!\!\!\!\!\!\!\!\!\!\!\!\stackrel{\eqref{eq:C size}}{\ge}
\frac{1}{2^{\ell(n-1)}} \sum_{i=1}^n\sum_{\substack{y\in
\{0,\ldots,2^{\ell}-1\}^n\\y_i=0}}
\frac{\left\|f\left(x+\frac{\t}{2^\ell}(y+2^\ell e_i)\right)
-f\left(x+\frac{\t}{2^\ell}y\right)\right\|_Y^p}{\t^p}
+\frac{\e^{2(n-1+p)}}{K^p(4n)^{6n+5p}}.
\end{eqnarray}

Now, using the recursive identity~\eqref{eq:recursive idenity}, we have
\begin{equation}\label{eq:use recursive}
H^{1/(2n)}_{(M+1)m,(k+1)m}(f)(0)=\frac{1}{2^{kmn}}\sum_{x\in
\{0,\ldots,2^{km}-1\}^n}H^{2^{-km}/(2n)}_{(M+1-k)m,m}(f)\left(\frac{2^{-km}}{2n}x\right).
\end{equation}
We relate~\eqref{eq:use recursive} to~\eqref{eq:for recursion multi dim} by noting the following identity, in which we recall that $\t$ is given in~\eqref{eq:def theta} and $\ell=(M+1-k)m$.
\begin{eqnarray}\label{partition identity}
&&\!\!\!\!\!\!\!\!\!\!\!\!\nonumber\frac{1}{2^{kmn+\ell(n-1)}}\sum_{x\in
\{0,\ldots,2^{km}-1\}^n} \sum_{i=1}^n\sum_{\substack{y\in
\{0,\ldots,2^{\ell}-1\}^n\\y_i=0}}
\frac{\left\|f\left(\frac{2^{-km}}{2n}x+\frac{\t}{2^\ell}(y+2^\ell
e_i)\right)-f\left(\frac{2^{-km}}{2n}x+\frac{\t}{2^\ell}y\right)\right\|_Y^p}{\t^p}\\\nonumber
&=& \frac{1}{2^{(M+1)m(n-1)+km}}\sum_{i=1}^n\sum_{z\in
\left\{0,\ldots,
2^{(M+1)m}-1\right\}^n}\frac{\left\|f\left(\frac{2^{-(M+1)m}}{2n}z+\frac{2^{-km}}{2n}e_i\right)
-f\left(\frac{2^{-(M+1)m}}{2n}z\right)\right\|_Y^p}{(2^{-km}/(2n))^p}\\\nonumber
&\stackrel{\eqref{eq:def Eab}}{=}&
\frac{1}{2^{(M+1)m(n-1)}}\sum_{i=1}^n \sum_{\substack{y\in
\left\{1,\ldots,2^{(M+1)m}-1\right\}\\y_i=0}}
E_{km}^{0,1/(2n)}\left(f_i^{\frac{2^{-(M+1)m}}{2n}y}\right)\\&\stackrel{\eqref{eq:two
parameter H}}{=}&H^{1/(2n)}_{(M+1)m,km}(f)(0).
\end{eqnarray}

By combining~\eqref{eq:for recursion multi dim}, \eqref{eq:use recursive} and~\eqref{partition identity} we conclude that
$$
\forall k\in \{0,\ldots, M\}, \quad
H^{1/(2n)}_{(M+1)m,(k+1)m}(f)(0)\ge
H^{1/(2n)}_{(M+1)m,km}(f)(0)+\frac{\e^{2(n-1+p)}}{K^p(4n)^{6n+5p}}.
$$
Hence,
\begin{equation}\label{eq:contradiction n}
n\stackrel{\eqref{eq:H a priori bound}}{\ge}
H^{1/(2n)}_{(M+1)m,(M+1)m}(f)(0)\ge
(M+1)\frac{\e^{2(n-1+p)}}{K^p(4n)^{6n+5p}}.
\end{equation}
Recalling the definitions~\eqref{eq:R goaln}, \eqref{eq:def m} and~\eqref{eq:def M high dim}, and that $K\ge 1$, $\e\in(0,\frac12)$ and $p,n\ge 2$, one checks that~\eqref{eq:contradiction n} is a contradiction.
\end{proof}

\section{An example}\label{sec:example}

We start with a simple one dimensional construction.
\begin{lemma}\label{lem:one dim counter}
Fix  $p\in [2,\infty)$ and $m\in \N$. There exists a $1$-Lipschitz function $f:[0,1]\to \ell_p^m$ with $f(0)=f(1)=0$ such that for every $0\le a<b\le 1$ with $b-a\ge 4/2^m$ and every affine mapping $A:\R\to \ell_p^m$ we have
\begin{equation*}
\sup_{x\in [a,b]}\frac{\|f_m(x)-A(x)\|_p}{(b-a)/2}> \frac{1}{8m^{1/p}}.
\end{equation*}
Consequently, if we set  $\e=\frac{1}{8m^{1/p}}$ then
$$
r^{\R\to \ell_p}(\e)\le \frac{4}{2^{1/(8\e)^p}}.
$$
\end{lemma}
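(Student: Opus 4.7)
The plan is to build $f_m$ coordinate-wise out of sawtooth functions whose wavelengths decay geometrically across the $m$ coordinates of $\ell_p^m$. Let $\phi:\R\to[0,\tfrac12]$ be the $1$-periodic function $\phi(x)\eqdef\operatorname{dist}(x,\Z)$, which is $1$-Lipschitz, vanishes at integers, and attains value $\tfrac12$ at half-integers. For $i\in\{1,\ldots,m\}$ set $c_i\eqdef 2^{-i}m^{-1/p}$ and define the $i$th coordinate of $f_m$ by $(f_m)_i(x)\eqdef c_i\phi(2^i x)$. Then $(f_m)_i(0)=(f_m)_i(1)=0$, each coordinate is $2^ic_i=m^{-1/p}$-Lipschitz, so $\|f_m'(x)\|_p=(m\cdot m^{-1})^{1/p}=1$ almost everywhere; hence $f_m$ is $1$-Lipschitz into $\ell_p^m\subseteq\ell_p$.

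For the oscillation bound on $[a,b]\subseteq[0,1]$ with $b-a\ge 4/2^m$, I would pick $i$ to be the unique integer with $(b-a)/4<2^{-i}\le (b-a)/2$; the hypothesis ensures $i\in\{1,\ldots,m\}$. Since $b-a\ge 2\cdot 2^{-i}$, the interval $[a,b]$ contains two consecutive integer multiples of $2^{-i}$, call them $x_1<x_3$, as well as their midpoint $x_2=(x_1+x_3)/2$; at these three points $(f_m)_i$ takes the values $0,\ c_i/2,\ 0$, respectively. For any affine $A:\R\to \ell_p$ the $i$th coordinate $A_i:\R\to\R$ is affine, hence $A_i(x_2)=\tfrac12(A_i(x_1)+A_i(x_3))$. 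Setting $M\eqdef \max_{j\in\{1,2,3\}}|(f_m)_i(x_j)-A_i(x_j)|$, one has $|A_i(x_1)|,|A_i(x_3)|\le M$ (so $|A_i(x_2)|\le M$ by averaging) and $|c_i/2-A_i(x_2)|\le M$, which combine via the triangle inequality to give $c_i/2\le 2M$. Since $\|v\|_p\ge |v_i|$ for $v\in\ell_p$, this yields $\sup_{[a,b]}\|f_m-A\|_p\ge M\ge c_i/4=2^{-i-2}m^{-1/p}$; combined with $2^{-i}>(b-a)/4$ this is strictly greater than $(b-a)/(16m^{1/p})$, and dividing by $(b-a)/2$ produces the desired $>1/(8m^{1/p})$.

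For the consequence, set $\e=1/(8m^{1/p})$ (equivalently $m=1/(8\e)^p$) and rescale $f_m$ to the unit ball $B_\R=[-1,1]$ via $\tilde f(x)\eqdef f_m((x+1)/2)$, which is $\tfrac12$-Lipschitz and vanishes at $\pm 1$. Any sub-ball $[c-\rho,c+\rho]\subseteq[-1,1]$ corresponds under $x\mapsto (x+1)/2$ to an interval of length $\rho$ in $[0,1]$; for $\rho\ge 4/2^m$ the previous paragraph, applied to an arbitrary affine map $\R\to \ell_p$, gives an oscillation on that sub-ball exceeding $\rho/(16m^{1/p})=\e\rho\|\tilde f\|_{\Lip}$, so no such sub-ball can yield an $\e$-approximation. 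Therefore $r^{\R\to \ell_p}(\e)\le 4/2^m=4/2^{1/(8\e)^p}$. The main technical point I anticipate is aligning the two competing requirements $2^{-i}\le (b-a)/2$ (needed so that $[a,b]$ actually contains a valley--peak--valley configuration of $(f_m)_i$) and $2^{-i}>(b-a)/4$ (needed so that the amplitude $c_i/2$ stays proportional to $b-a$); the geometric scaling $c_i=2^{-i}m^{-1/p}$ leaves exactly a one-octave window in which these coexist, and this window is populated by some integer $i\le m$ precisely when $b-a\ge 4/2^m$.
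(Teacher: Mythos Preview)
Your proof is correct and follows essentially the same approach as the paper. The paper builds $f_m$ recursively---at step $k$ adding a tent bump of height $2^{-k}m^{-1/p}$ in the $e_k$-direction at each midpoint of the level-$(k-1)$ dyadic intervals---which yields exactly the sawtooth $(f_m)_k(x)=2^{-(k-1)}m^{-1/p}\phi(2^{k-1}x)$; your explicit coordinate-wise definition $(f_m)_i(x)=2^{-i}m^{-1/p}\phi(2^ix)$ is the same function up to a harmless shift of the index range by one. The oscillation argument is likewise identical in spirit: both locate a scale comparable to $b-a$, find a valley--peak--valley triple of the corresponding sawtooth inside $[a,b]$, and use that affine maps preserve midpoints; the paper phrases this as a contradiction with $b-a<8/2^k$, while you extract the constant directly, but the content is the same.
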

\begin{proof}
Define inductively a sequence of functions $\{f_k:[0,1]\to \ell_p^m\}_{k=0}^m$ as follows. Let $\{e_1,\ldots,e_m\}$ be the standard basis of $\ell_p^m$. Set $f_0\equiv 0$. Assume that $k\in \N$ and we have defined $f_{k-1}$ to be affine on each of the dyadic intervals $\{[j/2^{k-1},(j+1)/2^{k-1}]\}_{j=0}^{2^{k-1}-1}$. For every $j\in \{0,\ldots,2^{k-1}\}$ define $f_k(j/2^{k-1})=f_{k-1}(j/2^{k-1})$ and
\begin{equation}\label{eq:def f_k}
f_k\left(\frac{2j+1}{2^k}\right)=f_{k-1}\left(\frac{2j+1}{2^k}\right)+\frac{1}{m^{1/p}2^k}e_k.
\end{equation}
Let $f_k$ be the piecewise affine extension of the above values of $f_k$ on $\{j/2^k\}_{k=0}^{2^k-1}$. A straightforward induction shows that
$$
\left\|f_k\left(\frac{j+1}{2^k}\right)-f_k\left(\frac{j}{2^k}\right)\right\|_p
=\frac{1}{2^k}\left(\frac{k}{m}\right)^{1/p}.
$$
Thus $f_m$ is $1$-Lipschitz.

Assume for contradiction that $0\le a<b\le 1$ satisfy $b-a\ge 4/2^m$, and there exists an affine mapping $A:\R\to \ell_p$ such that
\begin{equation}\label{eq:approx}
\sup_{x\in [a,b]}\frac{\|f_m(x)-A(x)\|_p}{(b-a)/2}\le \frac{1}{8m^{1/p}}.
\end{equation}
There exists $k\in \{1,\ldots,m\}$ such that $4/2^{k}\le b-a<8/2^k$. Because $b-a\ge 4/2^k$ there is $j\in \{0,\ldots,2^{k-1}-1\}$ such that $[j/2^{k-1},(j+1)/2^{k-1}]\subseteq [a,b]$. Now, since $A$ is affine and $f_{k-1}$ is affine on $[j/2^{k-1},(j+1)/2^{k-1}]$,
\begin{multline*}
\frac{b-a}{8m^{1/p}}\stackrel{\eqref{eq:approx}}{\ge} \left\|f_m\left(\frac{j/2^{k-1}+(j+1)/2^{k-1}}{2}\right)-\frac{f_m\left(j/2^{k-1}\right)+
f_m\left((j+1)/2^{k-1}\right)}{2}\right\|_p\\=\left\|f_k\left(\frac{2j+1}{2^k}\right)
-f_{k-1}\left(\frac{2j+1}{2^k}\right)\right\|_p\stackrel{\eqref{eq:def f_k}}{=}\frac{1}{m^{1/p}2^k},
\end{multline*}
in contradiction to the fact that $b-a<8/2^k$.
\end{proof}

\begin{lemma}\label{lem:localized 1 dim}
Fix  $p\in [2,\infty)$ and $m,n\in \N$. There exists a $1$-Lipschitz function $g:\R\to \ell_p^{m+1}$ such that for every $y\in \R$ with $|y|\le 1/\sqrt{n}$, every $r\ge 32/\left(\sqrt{n}2^{m}\right)$, and every affine mapping $A:\R\to \ell_p^{m+1}$,
\begin{equation*}\label{eq:desired localized}
\sup_{x\in [y-r,y+r]}\frac{\left\|g(x)-A(x)\right\|_p}{r}>\frac{1}{16m^{1/p}}.
\end{equation*}
\end{lemma}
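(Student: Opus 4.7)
I would build $g$ by rescaling the function $f:[0,1]\to\ell_p^m$ of Lemma~\ref{lem:one dim counter} to the length scale $1/\sqrt n$ (matching the hypothesis $|y|\le 1/\sqrt n$), extending by zero to $\R$ (which is legitimate because $f(0)=f(1)=0$), and adjoining a hat function in the extra coordinate $e_{m+1}$ to rule out trivially good affine approximations at the scales where $[y-r,y+r]$ overruns the support of the rescaled $f$.

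\textbf{Construction.} Let $\tilde f:\R\to\ell_p^m$ equal $f$ on $[0,1]$ and vanish elsewhere, which is $1$-Lipschitz on $\R$. Set
\[
\bar f(x)\eqdef\tfrac{2}{\sqrt n}\,\tilde f\!\bigl(\tfrac{\sqrt n\,x+1}{2}\bigr),\qquad \phi(x)\eqdef\max\!\bigl(0,\tfrac{1}{\sqrt n}-\tfrac{|x|}{2}\bigr),
\]
so $\bar f:\R\to\ell_p^m$ is $1$-Lipschitz with support $[-1/\sqrt n,1/\sqrt n]$, and $\phi:\R\to\R$ is a $1$-Lipschitz hat with support $[-2/\sqrt n,2/\sqrt n]$ and peak $\phi(0)=1/\sqrt n$. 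Define
\[
g(x)\eqdef 2^{-1/p}\bigl(\bar f(x),\,\phi(x)\,e_{m+1}\bigr)\in\ell_p^{m+1},
\]
which is $1$-Lipschitz because $\|(u,v)\|_p\le 2^{1/p}\max(\|u\|_p,|v|)$.

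\textbf{Case analysis.} Given $y$, $r$ and an affine $A:\R\to\ell_p^{m+1}$, set $\eta\eqdef\sup_{x\in[y-r,y+r]}\|g(x)-A(x)\|_p$. If $[y-r,y+r]\subseteq[-1/\sqrt n,1/\sqrt n]$, the affine change of variables $x\mapsto(\sqrt n\,x+1)/2$ sends this interval to a subinterval of $[0,1]$ of length $r\sqrt n\ge 32/2^m>4/2^m$; projecting $g$ and $A$ onto the first $m$ coordinates and applying Lemma~\ref{lem:one dim counter} to the corresponding transformed affine map yields $\eta\ge 2^{1-1/p}\cdot r/(16m^{1/p})>r/(16m^{1/p})$, where the factor $2^{1-1/p}\ge\sqrt 2$ absorbs the $2^{-1/p}$ normalization of $g$. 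Otherwise, $[y-r,y+r]$ contains a subinterval of positive length on which both $\bar f$ and $\phi$ vanish; the bound $\|A(x)\|_p\le\eta$ on such a segment, combined with affineness of $A$ applied at two well-separated zero-region points, forces $\|A(x)\|_p\le 2\eta$ throughout $[y-r,y+r]$; evaluating at $x=0$ where $\|g(0)\|_p\ge 2^{-1/p}\phi(0)=1/(2^{1/p}\sqrt n)$ then gives $\eta\ge 1/(2^{1+1/p}\sqrt n)>r/(16m^{1/p})$ in the relevant range of $r$.

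\textbf{Main obstacle.} The bookkeeping of constants is delicate: the factor $32$ in the hypothesis $r\ge 32/(\sqrt n\,2^m)$ is calibrated to absorb simultaneously the $2^{1/p}\le\sqrt 2$ loss from normalization, the factor $2$ from passing between radius and length, and the $(b-a)/2$ appearing in Lemma~\ref{lem:one dim counter}. The subtlest regime is the intermediate one where $[y-r,y+r]$ straddles the boundary of the support of $\bar f$ (for instance when $|y|$ is close to $1/\sqrt n$ and $r$ is only a little larger than $1/\sqrt n-|y|$): here neither the direct application of Lemma~\ref{lem:one dim counter} nor the zero-region argument for the hat works cleanly, and one must split $[y-r,y+r]$ into its zero and non-zero parts, apply Lemma~\ref{lem:one dim counter} on the non-zero part (which remains long enough after rescaling thanks to the factor $32$) and use the vanishing of $g$ on the zero part to simultaneously control the slope and the constant term of the best affine approximation.
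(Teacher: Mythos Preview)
Your construction has a fatal flaw that cannot be repaired by bookkeeping: your $g$ is compactly supported (both $\bar f$ and $\phi$ vanish outside $[-2/\sqrt n,2/\sqrt n]$), so $\|g(x)\|_p\le C/\sqrt n$ for all $x\in\R$. The lemma, however, must hold for \emph{every} $r\ge 32/(\sqrt n\,2^m)$ with no upper bound on $r$. Taking $A\equiv 0$ gives
\[
\sup_{x\in[y-r,y+r]}\frac{\|g(x)-A(x)\|_p}{r}\le \frac{C}{r\sqrt n},
\]
which drops below $1/(16m^{1/p})$ once $r>16Cm^{1/p}/\sqrt n$. Your ``otherwise'' case tacitly assumes $r$ is bounded (``in the relevant range of $r$''), but the statement gives you no such bound. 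The paper's $g$ avoids this by letting the $(m+1)$-st coordinate grow linearly outside $[-2/\sqrt n,2/\sqrt n]$ (namely $g(x)=(|x|-2/\sqrt n)e_{m+1}$ there), so that $\|g(y\pm r)\|_p\asymp r$ for large $r$; any good affine approximant is then forced to have a large $(m+1)$-st coordinate at the midpoint, contradicting $\langle g(y),e_{m+1}\rangle=0$.

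Even for moderate $r$ your dichotomy is incomplete. If $|y|=1/\sqrt n$ and $32/(\sqrt n\,2^m)\le r\le 1/\sqrt n$ (which occurs for all $m\ge 5$), then $[y-r,y+r]\not\subseteq[-1/\sqrt n,1/\sqrt n]$ yet $[y-r,y+r]\subseteq[-2/\sqrt n,2/\sqrt n]$, so there is \emph{no} subinterval on which $\phi$ vanishes, and on that interval $\phi$ is affine, contributing nothing. Restricting to the support of $\bar f$ and invoking Lemma~\ref{lem:one dim counter} then yields only $\eta>2^{-1/p}r/(16m^{1/p})$, because the intersection has length $\ge r$ rather than $2r$ and you have already spent a factor $2^{-1/p}$ on normalization. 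The paper sidesteps this by giving the two components \emph{disjoint} supports, so no normalization factor is needed, and by placing the rescaled $f$ on the wider interval $[-2/\sqrt n,2/\sqrt n]$ so that the intersection with $[y-r,y+r]$ always has length at least $r/2$ when $r\le 8/\sqrt n$.
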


\begin{proof} Let $\{e_1,\ldots,e_{m+1}\}$ denote the standard basis of $\ell_p^{m+1}$. Define $g:\R\to \ell_p^{m+1}$ by
$$
g(x)\eqdef \left\{\begin{array}{ll}\frac{4}{\sqrt{n}}f\left(\frac{\sqrt{n}}{4}x+\frac12\right)&\mathrm{if}\ |x|\le \frac{2}{\sqrt{n}},\\
\left(|x|-\frac{2}{\sqrt{n}}\right)e_{m+1}&\mathrm{otherwise}.\end{array}\right.
$$
where $f=f_m:[0,1]\to \ell_p^n=\mathrm{span}(\{e_1,\ldots,e_m\})$ is the function from Lemma~\ref{lem:one dim counter}. Because $f$ is $1$-Lipschitz and $f(0)=f(1)=0$, one checks that $g$ is $1$-Lipschitz.

Fix an affine mapping $A:\R\to \ell_p$ and take $y\in \R$ satisfying $|y|\le 1/\sqrt{n}$. Suppose that $r\ge 32/\left(\sqrt{n}2^{m}\right)$. If in addition $r\le 8/\sqrt{n}$ then write $[y-r,y+r]\cap [-2/\sqrt{n},2/\sqrt{n}]=[a,b]$, where $b-a\ge r/2\ge 16/\left(\sqrt{n}2^{m}\right)$. By Lemma~\ref{lem:one dim counter},
\begin{multline*}
\sup_{x\in [y-r,y+r]}\frac{\left\|g(x)-A(x)\right\|_p}{r}\ge \sup_{x\in [a,b]}\frac{\left\|\frac{4}{\sqrt{n}}f\left(\frac{\sqrt{n}}{4}x+\frac12\right)-A(x)\right\|_p}{2(b-a)}\\=
\frac12\sup_{z\in\left[\frac{\sqrt{n}}{4}a+\frac12,\frac{\sqrt{n}}{4}b+\frac12\right]}
\frac{\left\|f(z)-\frac{\sqrt{n}}{4}A\left(\frac{4}{\sqrt{n}}z-\frac{2}{\sqrt{n}}\right)\right\|_p}
{\left(\frac{\sqrt{n}}{4}b-\frac{\sqrt{n}}{4}a\right)/2}>\frac{1}{16m^{1/p}}.
\end{multline*}

It remains to deal with the case $r> 8/\sqrt{n}$. In this case $y-r,y+r\notin [-2/\sqrt{n},\sqrt{n}]$, so
$$
\left\langle g(y\pm r),e_{m+1}\right\rangle=|y\pm r|-\frac{2}{\sqrt{n}}\ge r-|y|-\frac{2}{\sqrt{n}}\ge r-\frac{3}{\sqrt{n}}>\frac{5}{\sqrt{n}}>0.
 $$
 Assume for contradiction that $\|g(x)-A(x)\|_p\le r/(16 m^{1/p})$ for all $x\in [y-r,y+r]$. Then, since $A$ is affine,
$$
 \left\langle A(y),e_{m+1}\right\rangle =\frac{ \left\langle A(y+r),e_{m+1}\right\rangle+ \left\langle A(y-r),e_{m+1}\right\rangle}{2}\ge r-\frac{3}{\sqrt{n}}- \frac{r}{16m^{1/p}}.
$$
Hence,
$$
\left\langle g(y),e_{m+1}\right\rangle \ge  \left\langle A(y),e_{m+1}\right\rangle-\|g(y)-A(y)\|_p\ge \left(1-\frac{1}{8m^{1/p}}\right)r-\frac{3}{\sqrt{n}}>\frac{r}{2}-\frac{3}{\sqrt{n}}>\frac{1}{\sqrt{n}},
$$
contradicting the fact that, since $|y|\le 2/\sqrt{n}$, we have $\left\langle g(y),e_{m+1}\right\rangle=0$.
\end{proof}

We now use the function $g$ of Lemma~\ref{lem:localized 1 dim} as a building block of a function $F:\ell_2^n\to \ell_2^n(\ell_p^{m+1})$ whose affine approximability properties deteriorate with the dimension $n$. This step is similar to an argument in the proof of Theorem~2.7 in~\cite{BaJoLi}.

\begin{lemma}\label{lem:n-dim example}
For every  $p\in [2,\infty)$ and every $m,n\in \N$ there exists a $1$-Lipschitz function $F:\ell_2^n\to \ell_2^n(\ell_p^{m+1})$ such that for every $$r\ge \frac{32}{\sqrt{n}2^{m}}$$ and every affine mapping $A:\ell_2^n\to \ell_2^n(\ell_p^{m+1})$,
$$
\sup_{x\in y+r B_{\ell_2^n}}\frac{\left\|F(x)-A(x)\right\|_{\ell_2^n(\ell_p^{m+1})}}{r}>\frac{1}{16m^{1/p}}.
$$
Consequently, if we set $\e= \frac{1}{16m^{1/p}}$ then for $X=\ell_2^n$ and $Y=\ell_2^n(\ell_p^{m+1})$,
$$
r^{X\to Y}(\e)\le \frac{32}{\sqrt{n} 2^{1/(16\e)^p}}.
$$
\end{lemma}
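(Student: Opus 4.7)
The plan is to build $F$ coordinatewise out of the one-dimensional function $g:\R\to \ell_p^{m+1}$ from Lemma~\ref{lem:localized 1 dim}. Identify $\ell_2^n(\ell_p^{m+1})$ with the $\ell_2$-direct sum of $n$ copies of $\ell_p^{m+1}$, and define
$$
F(x)\eqdef \bigl(g(x_1),\,g(x_2),\,\ldots,\,g(x_n)\bigr),
$$
so that $g(x_j)$ occupies the $j$-th block of the target. Since $g$ is $1$-Lipschitz into $\ell_p^{m+1}$, for every $x,y\in\ell_2^n$ we obtain
$$
\|F(x)-F(y)\|_{\ell_2^n(\ell_p^{m+1})}^2=\sum_{j=1}^n\|g(x_j)-g(y_j)\|_p^2\le \sum_{j=1}^n|x_j-y_j|^2=\|x-y\|_2^2,
$$
so $F$ is $1$-Lipschitz, as required.

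For the rigidity, fix $r\ge 32/(\sqrt{n}\,2^m)$ and a center $y$ with $\|y\|_2\le 1$ (which is the only regime needed, since for the consequence we can restrict $F$ to $B_{\ell_2^n}$ and any ball $y+\rho B_{\ell_2^n}\subseteq B_{\ell_2^n}$ forces $\|y\|_2\le 1-\rho\le 1$). By pigeonhole there exists $j^*\in\{1,\ldots,n\}$ with $|y_{j^*}|\le \|y\|_2/\sqrt{n}\le 1/\sqrt{n}$, which places $y_{j^*}$ in the \emph{wild regime} of the 1D function $g$. The axial segment $\{y+te_{j^*}:|t|\le r\}$ lies in $y+rB_{\ell_2^n}$. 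Given any affine $A:\ell_2^n\to\ell_2^n(\ell_p^{m+1})$, let $\alpha:\R\to\ell_p^{m+1}$ be the affine map obtained by composing $t\mapsto A(y+te_{j^*})$ with the projection onto the $j^*$-th block. The $j^*$-th block of $F(y+te_{j^*})$ equals $g(y_{j^*}+t)$, while all other blocks are constant in $t$. Consequently,
$$
\|F(y+te_{j^*})-A(y+te_{j^*})\|_{\ell_2^n(\ell_p^{m+1})}\ge \|g(y_{j^*}+t)-\alpha(t)\|_p.
$$
Reparametrizing by $s=y_{j^*}+t$ makes $\alpha$ an affine function of the scalar variable $s$ on $[y_{j^*}-r,y_{j^*}+r]$, so Lemma~\ref{lem:localized 1 dim} applied at the center $y_{j^*}$ (legitimate because $|y_{j^*}|\le 1/\sqrt{n}$ and $r\ge 32/(\sqrt{n}\,2^m)$) yields
$\sup_{t\in[-r,r]}\|g(y_{j^*}+t)-\alpha(t)\|_p/r>1/(16m^{1/p})$, which gives the desired lower bound on $\sup_{x\in y+rB_{\ell_2^n}}\|F(x)-A(x)\|/r$.

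For the stated consequence, set $\e\eqdef 1/(16m^{1/p})$, so that $m=1/(16\e)^p$ and $32/(\sqrt{n}\,2^m)=32/(\sqrt{n}\,2^{1/(16\e)^p})$. The argument above shows that $F|_{B_{\ell_2^n}}$ is $1$-Lipschitz yet admits no affine $\e$-approximation on any sub-ball $y+\rho B_{\ell_2^n}\subseteq B_{\ell_2^n}$ with $\rho$ at least this threshold, yielding $r^{\ell_2^n\to\ell_2^n(\ell_p^{m+1})}(\e)\le 32/(\sqrt{n}\,2^{1/(16\e)^p})$. The only genuinely delicate point in the whole argument is the pigeonhole step: because the 1D building block $g$ is affine outside the interval $[-2/\sqrt{n},2/\sqrt{n}]$, one must arrange that the line direction $e_{j^*}$ is chosen so that $y_{j^*}$ falls in the wild regime of $g$. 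The constraint $\|y\|_2\le 1$ (inherited from $y+\rho B_{\ell_2^n}\subseteq B_{\ell_2^n}$) is precisely what makes this possible via averaging over the $n$ coordinates.
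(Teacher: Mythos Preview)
Your proof is correct and follows essentially the same approach as the paper: define $F(x)=(g(x_1),\ldots,g(x_n))$, check that it is $1$-Lipschitz coordinatewise, use the constraint $\|y\|_2\le 1$ to find a coordinate $j^*$ with $|y_{j^*}|\le 1/\sqrt{n}$, restrict to the axial segment in the $e_{j^*}$ direction, project to the $j^*$-th $\ell_p^{m+1}$ block, and invoke Lemma~\ref{lem:localized 1 dim}. The paper's proof is the same argument, written slightly more tersely.
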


\begin{proof}
Let $g:\R\to \ell_p^{m+1}$ be the function from Lemma~\ref{lem:localized 1 dim}. Since $g$ is $1$-Lipschitz, if we define $F(x_1,\ldots,x_n)=(g(x_1),\ldots,g(x_n))$ then $F:\ell_2^n\to \ell_2^n(\ell_p^{m+1})$ is $1$-Lipschitz. Fixing $r\ge 32/\left(\sqrt{n}2^{m}\right)$, suppose that $A:\ell_2^n\to \ell_2^n(\ell_p^{m+1})$ is affine and $y+rB_{\ell_2^n}\subseteq B_{\ell_2^n}$. Since $\|y\|_{2}<1$, there exists $i\in \{1,\ldots,n\}$ such that $|y_i|<1/\sqrt{n}$. Writing $A(x)=(A_1(x),\ldots,A_n(x))$, define $A_i':\R\to \ell_p^{m+1}$  by $A_i'(t)=A_i\left(\sum_{j\in \{1,\ldots,n\}\setminus \{i\}}y_je_j+t e_i\right)$. By Lemma~\ref{lem:localized 1 dim} we know that
$$
\sup_{x\in y+r B_{\ell_2^n}}\frac{\left\|F(x)-A(x)\right\|_{\ell_2^n(\ell_p^{m+1})}}{r}\ge \sup_{t\in [y_i-r,y_i+r]}\frac{\left\|g(t)-A_i'(t)\right\|_{p}}{r}>\frac{1}{16m^{1/p}},
$$
where we used that fact that $y+[-r,r]e_i\subseteq y+r B_{\ell_2^n}$.
\end{proof}

%\cite{BCL94,Fig76,Bur01,DS93,DS93-book,DS00}\cite{Bou87,GNS11,Beg99,NS07,CKN09,JLS86,Nao98,EFW07,Mat99,HLN12}.

\bibliographystyle{abbrv}
\bibliography{uaap}

\begin{thebibliography}{10}

\bibitem{BCL94}
K.~Ball, E.~A. Carlen, and E.~H. Lieb.
\newblock Sharp uniform convexity and smoothness inequalities for trace norms.
\newblock {\em Invent. Math.}, 115(3):463--482, 1994.

\bibitem{BaJoLi}
S.~Bates, W.~Johnson, J.~Lindenstrauss, D.~Preiss, and G.~Schechtman.
\newblock Affine approximation of {L}ipschitz functions and nonlinear
  quotients.
\newblock {\em Geometric and Functional Analysis}, 9:1092--1127, 1999.

\bibitem{Ben85}
Y.~Benyamini.
\newblock The uniform classification of {B}anach spaces.
\newblock In {\em Texas functional analysis seminar 1984--1985 ({A}ustin,
  {T}ex.)}, Longhorn Notes, pages 15--38. Univ. Texas Press, Austin, TX, 1985.

\bibitem{BL00}
Y.~Benyamini and J.~Lindenstrauss.
\newblock {\em Geometric nonlinear functional analysis. {V}ol. 1}, volume~48 of
  {\em American Mathematical Society Colloquium Publications}.
\newblock American Mathematical Society, Providence, RI, 2000.

\bibitem{Bou87}
J.~Bourgain.
\newblock Remarks on the extension of {L}ipschitz maps defined on discrete sets
  and uniform homeomorphisms.
\newblock In {\em Geometrical aspects of functional analysis (1985/86)}, volume
  1267 of {\em Lecture Notes in Math.}, pages 157--167. Springer, Berlin, 1987.

\bibitem{Bur01}
D.~L. Burkholder.
\newblock Martingales and singular integrals in {B}anach spaces.
\newblock In {\em Handbook of the geometry of {B}anach spaces, {V}ol. {I}},
  pages 233--269. North-Holland, Amsterdam, 2001.

\bibitem{CKN09}
J.~Cheeger, B.~Kleiner, and A.~Naor.
\newblock Compression bounds for {L}ipschitz maps from the {H}eisenberg group
  to ${L}_1$.
\newblock Preprint available at \url{http://arxiv.org/abs/0910.2026}. To appear
  in Acta Math., 2009.

\bibitem{DJT95}
J.~Diestel, H.~Jarchow, and A.~Tonge.
\newblock {\em Absolutely summing operators}, volume~43 of {\em Cambridge
  Studies in Advanced Mathematics}.
\newblock Cambridge University Press, Cambridge, 1995.

\bibitem{Enf72}
P.~Enflo.
\newblock Banach spaces which can be given an equivalent uniformly convex norm.
\newblock In {\em Proceedings of the {I}nternational {S}ymposium on {P}artial
  {D}ifferential {E}quations and the {G}eometry of {N}ormed {L}inear {S}paces
  ({J}erusalem, 1972)}, volume~13, pages 281--288 (1973), 1972.

\bibitem{EFW07}
A.~Eskin, D.~Fisher, and K.~Whyte.
\newblock Quasi-isometries and rigidity of solvable groups.
\newblock {\em Pure Appl. Math. Q.}, 3(4, part 1):927--947, 2007.

\bibitem{Fig76}
T.~Figiel.
\newblock On the moduli of convexity and smoothness.
\newblock {\em Studia Math.}, 56(2):121--155, 1976.

\bibitem{Gel38}
I.~Gelfand.
\newblock {Abstrakte Funktionen und lineare Operatoren.}
\newblock {\em Rec. Math. Moscou, n. Ser.}, 4:235--284, 1938.

\bibitem{GNS11}
O.~Giladi, A.~Naor, and G.~Schechtman.
\newblock Bourgain's discretization theorem.
\newblock Preprint available at \url{http://arxiv.org/abs/1110.5368}, 2011.

\bibitem{HM82}
S.~Heinrich and P.~Mankiewicz.
\newblock Applications of ultrapowers to the uniform and {L}ipschitz
  classification of {B}anach spaces.
\newblock {\em Studia Math.}, 73(3):225--251, 1982.

\bibitem{HLN12}
T.~Hyt\"onen, S.~Li, and A.~Naor.
\newblock Quantitative affine approximation for {U}{M}{D} targets.
\newblock Preprint, 2011.

\bibitem{Jam64}
R.~C. James.
\newblock Uniformly non-square {B}anach spaces.
\newblock {\em Ann. of Math. (2)}, 80:542--550, 1964.

\bibitem{Jam72}
R.~C. James.
\newblock Super-reflexive {B}anach spaces.
\newblock {\em Canad. J. Math.}, 24:896--904, 1972.

\bibitem{Jo}
F.~John.
\newblock Extremum problems with inequalities as subsidiary conditions.
\newblock In {\em Studies and Essays Presented to R. Courant on his 60th
  Birthday}, pages 187--204. Interscience Publishers Inc., 1948.

\bibitem{JLS86}
W.~B. Johnson, J.~Lindenstrauss, and G.~Schechtman.
\newblock Extensions of {L}ipschitz maps into {B}anach spaces.
\newblock {\em Israel J. Math.}, 54(2):129--138, 1986.

\bibitem{Nao98}
A.~Naor.
\newblock Geometric problems in non-linear functional analysis.
\newblock Master's thesis, Hebrew University, 1998.

\bibitem{Pis75}
G.~Pisier.
\newblock Martingales with values in uniformly convex spaces.
\newblock {\em Israel J. Math.}, 20(3-4):326--350, 1975.

\end{thebibliography}
\end{document}